\newtheorem{theorem}{Theorem}[section]
\newtheorem{lemma}[theorem]{Lemma}
\theoremstyle{definition}
\newtheorem{definition}[theorem]{Definition}
\newtheorem{corollary}[theorem]{Corollary}
\newtheorem{proposition}[theorem]{Proposition}
\theoremstyle{remark}
\newtheorem{remark}[theorem]{Remark}
\numberwithin{equation}{section}
\begin{document}

\title{Chaotic and topological properties of  continued fractions }

\author{ Weibin Liu}
\address{Guangdong University of Foreign Studies South China Business College, Guangzhou, 510000, P. R. China}
\email{weibinliu@whu.edu.cn}
\thanks{*Corresponding author.}

\author{Bing Li$^*$}
\address{Department of Mathematics, South China University of Technology, Guangzhou, 510640, P. R. China}
\email{scbingli@scut.edu.cn}

\subjclass[2010]{11A55; 37F35; 74H65.}

\date{\today}

\keywords{continued fractions, Hausdorff dimension, Li-York chaotic.}

\begin{abstract}
We prove that there exists a scrambled set for the Gauss map with full Hausdorff dimension.
Meanwhile, we also  investigate the topological properties of the sets of points with dense or non-dense orbits.
\end{abstract}

\maketitle
\section{Introduction}

It is known that every irrational number $x\in [0,1)$ can admit an infinite continued fraction (CF)
induced by  Gauss transformation $T :[0,1)\rightarrow [0,1)$ given by
\begin{center}
$T(x):=\dfrac{1}{x}-\left\lfloor\dfrac{1}{x}\right\rfloor$~~for~$x\in(0,1)$~~and ~~$T(0):=0 $
\end{center}
where $\lfloor y \rfloor$ denotes the integer part of a real number  $y$, that is,
 $\lfloor y \rfloor=n$, if $y\in[n,n+1)$ for some $n\in \mathbb{Z}$. We set
 $$ a_1(x):=\lfloor x^{-1}\rfloor \quad \text{and} \quad a_n(x):=\lfloor (T^{n-1}(x))^{-1}\rfloor,~~ n\geq 2,$$
and have the following CF expansion of $x$:
\begin{equation*}
x=\dfrac{1}{a_1(x)+\dfrac{1}{a_2(x)+\dfrac{1}{a_3(x)+\cdots}}}=[a_1(x),a_2(x),a_3(x),\cdots].
\end{equation*}
The numbers $a_n(x)$ ($n\geq 1$) are called the partial quotients of  $x$.
In 1845, Gauss observed that $T$ preserves the probability measure given by
$$\mu(B)=\dfrac{1}{\log 2}\int_B\dfrac{1}{1+x}d\mathcal{L}(x),$$
where $B\subset [0,1)$ is any Borel measurable set and
$\mathcal{L}$ is the Lebesgue measure on $[0,1]$. The measure $\mu$ is called Gauss measure and equivalent with $\mathcal{L}$.

Continued fractions is a kind of representation of real numbers and an important tool to study the Diophantine approximation in number theory. Many metric and dimensional results on Diophantine approximation have been obtained with the help of continued fractions such as Good \cite{G}, Jarnik \cite{J}, Khintchine \cite{K} etc, and also the dimensional properties of continued fractions were considered, for example Wang and Wu \cite{WangWu08},  Xu \cite{Xu} etc. The behaviors of the continued fraction dynamical systems have been widely investigated, for example, the shrinking target problems \cite{LWWX14}, mixing property \cite{Philipp70}, thermodynamic formalism \cite{Mayer90}, limit theorems \cite{Faivre98} etc.  Hu and Yu \cite{HY}  were concerned with  the set
$\{x\in [0,1): \xi\not\in \overline{\{T^{n}(x): n\geq 0\}}\}$ for any $\xi\in [0,1)$
and proved that it  is 1/2-winning and thus it has full Hausdorff dimension.

 Asymptotic behaviors of the orbits are one of the main theme in dynamical systems. In this paper, we focus on the continued fraction dynamical system $([0, 1), T)$ and study firstly the denseness of the orbits, and secondly the size of the scrambled set from the sense of Hausdorff dimension.
Let
\begin{center}
$D:=\{x\in[0,1):$ the orbit of $x$ under $T$ is dense in  $[0,1]~~\},$
\end{center}
and $D^{c}$ be the complement of the set $D$ in $[0,1]$.
\begin{theorem}\label{thm1}
Let $T$ be the Gauss map on $[0,1)$. Then\\
  $(\rm 1)$ $D$ is of full Lebesgue measure in $[0,1]$.\\
  $(\rm 2)$ $D^{c}$ is of full Hausdorff dimension.\\
 $ (\rm 3)$ $D^{c}$ is of the first category in $[0,1]$.\\
 $(\rm 4)$ $D^{c}$ and $D$ are dense in $[0,1]$.
\end{theorem}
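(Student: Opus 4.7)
The plan handles the four assertions separately.

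For \emph{part (1)}, I would use that $T$ is ergodic with respect to the Gauss measure $\mu$, which is equivalent to the Lebesgue measure $\mathcal{L}$. Fixing a countable basis $\{U_k\}_{k\geq 1}$ of $[0,1]$ with $\mu(U_k)>0$, Birkhoff's ergodic theorem applied to $\mathbf{1}_{U_k}$ shows that for $\mathcal{L}$-a.e.\ $x$ the orbit of $x$ visits $U_k$ with positive frequency, hence infinitely often. Intersecting these full-measure sets over $k$ puts $D$ inside a set of full Lebesgue measure.

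For \emph{part (2)}, I would invoke the Hu--Yu theorem cited in the introduction: for any $\xi\in[0,1)$, the set $\{x:\xi\notin\overline{\{T^n(x):n\geq 0\}}\}$ is $1/2$-winning, hence of full Hausdorff dimension. Since this set is contained in $D^c$, one gets $\dim_H D^c=1$. A self-contained alternative is the classical fact that the set $F_N=\{x:a_n(x)\leq N\text{ for all }n\geq 1\}$ satisfies $\dim_H F_N\to 1$ as $N\to\infty$; every $x\in F_N$ has orbit avoiding the interval $(0,1/(N+1))$, so $F_N\subset D^c$.

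For \emph{part (3)}, I would show that $D$ is a dense $G_\delta$. Take the countable basis $\{U_k\}$ of $(0,1)$ consisting of open intervals $(p,q)$ with rational endpoints $0<p<q\leq 1$; this choice avoids the discontinuity points of $T$, namely $0$ and $\{1/n:n\geq 1\}$, all of which $T$ sends to $0\notin U_k$. A direct computation then gives $T^{-1}(U_k)=\bigsqcup_{n\geq 1}(1/(n+q),1/(n+p))$, which is open; iterating, each $T^{-n}(U_k)$ is open, so $V_k:=\bigcup_{n\geq 0}T^{-n}(U_k)$ is open and $D=\bigcap_k V_k$ is $G_\delta$. By part (1) the set $D$ is dense, hence so is each $V_k\supset D$, making $D^c=\bigcup_k V_k^c$ a countable union of closed nowhere dense sets, i.e.\ of the first category. \emph{Part (4)} then follows immediately: $D$ is dense by part (1), and every rational in $[0,1]$ has a finite CF expansion whose $T$-orbit reaches $0$ in finitely many steps and is thereafter constant (hence not dense), so $\mathbb{Q}\cap[0,1]\subset D^c$ provides the required dense subset.

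The main technical obstacle lies in \emph{part (2)}: parts (1), (3), (4) reduce to standard ergodic-theoretic and Baire-category arguments, whereas (2) requires genuine Hausdorff-dimension input, which is bypassed by quoting either Hu--Yu or the classical analysis of bounded-partial-quotient sets.
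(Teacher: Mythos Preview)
Your proposal is correct, and parts (1), (2) match the paper's approach almost exactly: the paper also reduces (1) to ergodicity of the Gauss measure (phrased via the decomposition $D^c=\bigcup_{u\in\mathbb{N}^*}F_u$ with $F_u=\{x:u\text{ does not occur in }(a_n(x))\}$), and for (2) the paper likewise invokes the bounded-partial-quotient sets $E_N$ with $\dim_H E_N\to 1$.

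The genuine difference is in (3). The paper stays with the same decomposition $D^c=\bigcup_u F_u$ and shows each $F_u$ is nowhere dense by a measure argument: taking the open core $(b,c)$ of the cylinder $I(u)$, ergodicity gives $\mu\big(\bigcup_{n\ge 0}T^{-n}(b,c)\big)=1$, and since this open set is disjoint from $F_u$ one gets $\mu(\overline{F_u})=0$, hence $\overline{F_u}$ has empty interior. Your route is purely topological: you show directly that $D$ is a dense $G_\delta$ by checking that $T^{-1}$ carries open subsets of $(0,1)$ to open subsets of $(0,1)$, so each $V_k=\bigcup_n T^{-n}(U_k)$ is open. This avoids the second appeal to ergodicity and is arguably cleaner, at the cost of needing the small computation that preimages stay away from the discontinuity points. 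For (4) the paper uses the density of $E=\bigcup_N E_N$ in place of your $\mathbb{Q}\cap[0,1)$; both are immediate.
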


The second statement $(\rm 2)$ of the theorem is a direct corollary of the result in \cite{HY} since the set studied in \cite{HY} is a subset of $D^{c}$.
Similar properties with this theorem for $\beta$-transformations have been given by \cite{LC}. For some other dynamical systems, the set of the points with nondense orbits is usually of full Hausdorff dimension, see also \cite{Kleinbock98}, \cite{Urbanski91} and references therein.

The chaotic property is a characterization of the asymptotic behaviors between different orbits in dynamical system.

\begin{definition}\cite{LY}
Let $(X,\rho)$ be a metric space. For two points $x,y\in X$, $(x,y)$ is a scrambled pair for
the map $f:X\rightarrow X$, if
\begin{center}
$\limsup\limits_{n\rightarrow \infty}\rho(f^{n}(x),f^{n}(y))>0$ and
$\liminf\limits_{n\rightarrow \infty}\rho(f^{n}(x),f^{n}(y))=0$.
\end{center}
A subset $S\subseteq X$, containing at least two points, is a scrambled set of $f$,
if for any $x,y\in S$, $x\neq y$,  $(x,y)$ is a scrambled pair for $f$.
If  a scrambled set $S$ for $f$ is  uncountable, we say that $f$ is chaotic in the sense of
Li-Yorke.
\end{definition}

It is  well known that \cite{BGK} the surjective continuous transformation on compact metric space with  positive topological entropy is
 chaotic in the sense of Li-Yorke. Even though  the topological entropy of the  Gauss map is infinite, the result above in \cite{BGK} can not be applied since the Gauss map is not continuous. We prove that the  Gauss map is chaotic as the following, indeed, the scrambled set can be very large from the dimensional sense, not just uncountable.

\begin{theorem}\label{thm2}
Let $T$ be the Gauss map on $[0,1)$. Then there exists a scrambled set in $[0,1)$ whose Hausdorff dimension is 1.
\end{theorem}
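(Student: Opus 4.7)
The plan is to construct a Cantor-like subset $S \subset [0,1)$ that is scrambled for $T$ and has Hausdorff dimension $1$, via controlled continued fraction expansions. The essential input is the classical theorem of Good and Hensley that $\dim_H F_M \to 1$ as $M \to \infty$, where $F_M := \{x \in [0,1) : a_n(x) \leq M \text{ for all } n\}$.

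First I would fix rapidly increasing integer sequences $M_k, n_k, t_k \to \infty$ with $t_k = o(n_k)$, and partition $\mathbb{N}$ into alternating blocks $A_1, B_1, A_2, B_2, \ldots$ with $|A_k| = n_k$ and $|B_k| = t_k$. Declare $S$ to consist of those $x \in [0,1)$ whose partial quotients satisfy $a_n(x) = 1$ for $n \in B_k$ (a ``synchronization'' block forcing iterates toward the golden-ratio fixed point $g = [1,1,1,\ldots]$) and $a_n(x) \in \{1,\ldots,M_k\}$ for $n \in A_k$, with the first coordinate of each $A_k$ acting as a ``marker'' restricted to $\{2,3\}$. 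Further parametrize $S$ so that distinct points have marker sequences differing at infinitely many indices, for example by restricting the markers to a Cantor-like subset of $\{2,3\}^{\mathbb{N}}$ where every pair of distinct sequences disagrees infinitely often.

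Next I would verify the scrambled-pair conditions for distinct $x, y \in S$. The synchronization blocks $B_k$ guarantee that, with $N_k$ the starting index of $B_k$, the iterate $T^{N_k - 1}x$ begins its CF expansion with $t_k$ ones, hence lies within $O(\phi^{-2 t_k})$ of $g$ (where $\phi = (1+\sqrt{5})/2$); the same estimate holds for $y$, so $|T^{N_k-1}x - T^{N_k-1}y| \to 0$ yields $\liminf = 0$. For $\limsup > 0$, the infinitely many marker disagreements place $T^{\alpha_k - 1}x$ and $T^{\alpha_k - 1}y$ in disjoint rank-$1$ cylinders $[2]$ and $[3]$ (with $\alpha_k$ the starting index of $A_k$); controlling the second partial quotient by a fixed bound yields sub-cylinders separated by a uniform positive gap, supplying the required lower bound along infinitely many $n$.

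Finally, $\dim_H S = 1$ would follow from the mass distribution principle: equip $S$ with a Bernoulli-type measure assigning roughly uniform weights on the $2 \cdot M_k^{n_k - 1}$ admissible configurations in each $A_k$, estimate basic cylinder diameters using bounded distortion of the Gauss map on digit-restricted cylinders, and compute the local H\"older exponent. The choices $t_k = o(n_k)$ and $M_k \to \infty$ drive this exponent to $\lim_k \dim_H F_{M_k} = 1$. The main obstacle will be the simultaneous calibration of the three parameter sequences together with the marker-Cantor restriction: the $B_k$'s must be long enough to force $\liminf = 0$, the $A_k$'s must dominate in total length to protect the dimension, and the marker scheme must guarantee that every pair of distinct admissible codes produces marker sequences differing infinitely often (to secure $\limsup > 0$) while still supporting a measure of full local dimension. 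The coding step and the dimension lower bound via mass distribution are the technical heart of the argument.
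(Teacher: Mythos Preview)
Your overall architecture---insert sparse control blocks into sequences with bounded partial quotients, then let the bound tend to infinity---matches the paper's. But the proposal has a genuine gap in the scrambled-pair verification.

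As you describe $S$, a point is specified by (i) a marker in $\{2,3\}$ at the first slot of each $A_k$, and (ii) free digits in $\{1,\dots,M_k\}$ at the remaining slots of $A_k$. These are independent choices. Hence two distinct points $x,y\in S$ may share the \emph{same} marker sequence while differing at only finitely many free positions; for such a pair $T^n x = T^n y$ for all large $n$, so $\limsup_n |T^n x - T^n y| = 0$ and $(x,y)$ is not scrambled. Restricting the markers to a subset of $\{2,3\}^{\mathbb N}$ in which distinct sequences differ infinitely often does nothing for pairs whose marker sequences already coincide. The missing idea is to \emph{couple} the control digits to the free digits. The paper does exactly this: it defines an injection $\Delta_N:\Sigma_N\to\Sigma_N$ that inserts, at a density-zero set of positions $R$, the sequence $\Theta_N\circ g_N(x)$ built deterministically from $x$; the maps $g_N,\Theta_N$ are designed so that every initial segment of $x$ and arbitrarily long blocks of $1$'s recur infinitely often in the inserted part. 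Thus distinct free parts automatically produce control sequences that differ infinitely often (giving $\limsup>0$), while the recurring $1$-blocks give $\liminf=0$.

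A secondary point: even when markers disagree at position $\alpha_k$, the cylinders $I(2)$ and $I(3)$ abut at $1/3$, and bounding the next digit only by $M_k$ yields a separation of order $1/M_k\to 0$, which does not secure $\limsup>0$. You would need the digit after the marker bounded independently of $k$ (e.g.\ place the marker immediately before the $B_k$ block of $1$'s); this is easy but must be built into the construction, not invoked afterwards. For the dimension, the paper avoids a direct mass-distribution computation: since the inserted positions satisfy $t(n)=O(n^{2/3})$, the ``deletion'' map $\phi\circ\Delta_N^{-1}\circ\phi^{-1}:\phi(S_N)\to E_N$ is locally $\frac{1}{1+\epsilon}$-H\"older for every $\epsilon>0$, whence $\dim_H\phi(S_N)\ge\dim_H E_N$ and one lets $N\to\infty$.
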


As a consequence, we have
\begin{corollary}
The Gauss transformation on $[0,1)$ is chaotic in the sense of Li-Yorke.
\end{corollary}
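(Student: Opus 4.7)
The plan is to construct an explicit scrambled set $S \subset [0,1)$ with $\dim_H S = 1$ by exploiting the continued-fraction shift structure. The two conditions of a Li-Yorke scrambled pair will be enforced by separate features of the construction: long \emph{synchronization} blocks of $1$'s in the CF expansion will force orbits to meet near the golden-ratio fixed point $g := (\sqrt{5}-1)/2$, yielding $\liminf = 0$; and a \emph{coding} of parameters at sparse marker positions will ensure that any two distinct points have CF digits differing at infinitely many positions, yielding $\limsup > 0$.

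I would choose a rapidly growing sequence $(n_k)$ and a slowly growing sequence $(\ell_k)$ with $\sum_k \ell_k / n_k < \infty$, declare $\mathbf{S}_k := \{n_k+1, \ldots, n_k+\ell_k\}$ to be the $k$th sync block, and set $\mathbf{F} := \mathbb{N} \setminus \bigcup_k \mathbf{S}_k$, a set of natural density one. For each integer $N \geq 2$ let $E_N$ be the set of $x$ with $a_i(x) \in \{1, \ldots, N\}$ for $i \in \mathbf{F}$ and $a_i(x) = 1$ for $i \in \bigcup_k \mathbf{S}_k$. A standard mass-distribution computation in the spirit of Good and Mauldin--Urba\'nski yields $\dim_H E_N = s_N$ with $s_N \nearrow 1$ as $N \to \infty$, since $\mathbf{F}$ has full density. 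The $\liminf$ condition is then automatic for every pair in $E_N$: at time $n_k$, both $T^{n_k}(x)$ and $T^{n_k}(y)$ lie in the CF cylinder $[1;1,\ldots,1]$ of length $\ell_k$, which shrinks to $g$, so $|T^{n_k}(x) - T^{n_k}(y)| \to 0$.

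The main obstacle is enforcing $\limsup > 0$ for every pair, because two distinct tail-equivalent points (those with $T^m(x) = T^m(y)$ for some $m$) automatically satisfy $\lim = 0$ and hence fail to form a scrambled pair. To eliminate such pairs I would carve out a subset $S_N \subset E_N$ whose distinct elements have CF expansions differing at infinitely many positions. One concrete way is to parametrize $S_N$ injectively by $\omega \in \{1, \ldots, N\}^{\mathbb{N}}$ via a redundant cumulative encoding at a sparse infinite subsequence $(q_k)$ of $\mathbf{F}$, for instance $a_{q_k}(x_\omega) := 1 + \bigl((\omega_1 + \cdots + \omega_k) \bmod N\bigr)$, while coupling the remaining free digits to $\omega$ so as to preserve the full dimension $s_N$ inherited from $E_N$. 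Under this coding, if $\omega \neq \omega'$ then the cumulative partial sums differ modulo $N$ from some index onward, so $a_{q_k}(x_\omega) \neq a_{q_k}(x_{\omega'})$ for infinitely many $k$; at each such $k$ the iterates $T^{q_k-1}(x_\omega)$ and $T^{q_k-1}(x_{\omega'})$ fall into disjoint first-digit cylinders of the form $(1/(a+1), 1/a]$ with $a \in \{1, \ldots, N\}$, hence remain at distance $\geq c_N > 0$, establishing $\limsup \geq c_N$.

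Finally, to upgrade the dimension from $s_N < 1$ to exactly one, I would set $S := \bigcup_{N \geq 2} S_N$, arranging the constructions so that the $S_N$ are pairwise disjoint and every cross-$N$ pair of points differs at infinitely many CF positions (for instance by using $N$ itself as the default filler digit at non-coded free positions of $S_N$); the sync structure is common across all $N$, so the $\liminf$ argument carries over to cross-$N$ pairs. Then $\dim_H S = \sup_N \dim_H S_N = \lim_N s_N = 1$ and $S$ is scrambled, proving Theorem~\ref{thm2}. The most delicate technical point will be the dimension-preservation claim for $S_N$: the coding must be rich enough to rule out tail-equivalent pairs yet efficient enough (markers of natural density zero, appropriately coupled with the free digits) that the Hausdorff dimension $s_N$ of $E_N$ is retained; if this direct approach does not quite close, one can fall back on a Feldman--Moore-type Borel transversal of the tail-equivalence relation on $E_N$ and argue dimension preservation by the countable-to-one nature of the equivalence.
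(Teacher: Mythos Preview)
Your overall architecture matches the paper's: build $S_N \subset E_N$ by sparsely interleaving synchronization blocks of $1$'s (for $\liminf = 0$) and coded marker digits (for $\limsup > 0$) into otherwise free $\{1,\ldots,N\}$-valued digits, show $\dim_H S_N = \dim_H E_N$ via a density-of-free-positions argument, and take $S = \bigcup_N S_N$. The paper realizes exactly this via Xiong's explicit maps $g_N,\Theta_N,\Psi_N,\Delta_N$ and proves dimension preservation by showing that the ``delete the marker digits'' map $\phi\circ\Delta_N^{-1}\circ\phi^{-1}$ is locally $\tfrac{1}{1+\epsilon}$-H\"older for every $\epsilon>0$.

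There are, however, two genuine soft spots in your plan. First, the cumulative-sum coding $a_{q_k}(x_\omega) = 1 + \bigl((\omega_1+\cdots+\omega_k)\bmod N\bigr)$ does \emph{not} separate all pairs: if $\omega$ and $\omega'$ differ at only finitely many coordinates and the total difference is divisible by $N$ (e.g.\ $N=3$, $\omega=(1,1,1,\ldots)$, $\omega'=(1,2,3,1,1,\ldots)$), the partial sums eventually agree mod $N$, the marker digits eventually coincide, and the pair is tail-equivalent---precisely the failure mode you were trying to exclude. The paper's remedy is a \emph{repetitive} encoding: $\Theta_N\circ g_N$ writes every initial segment $x_1\cdots x_n$ (and a block $1^n$) into the marker stream infinitely often, so a single coordinate difference recurs forever. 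Second, ``disjoint first-digit cylinders $\Rightarrow$ distance $\geq c_N$'' is false as written, since the intervals $(1/(a{+}1),1/a]$ tile $(0,1]$ with no gaps; what makes it true is that the \emph{next} digit is also $\leq N$, which keeps each iterate at least $|I(a,N{+}1)|$ away from the shared boundary. The paper makes exactly this observation (the gap interval $I(\cdot,N{+}1)$) in its $\limsup$ estimate. With these two repairs your outline goes through and is essentially the paper's proof.
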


The sizes of the scrambled sets have been considered for many dynamical systems from the sense of the measure, dimension, and topology, see also \cite{BL10}, \cite{Mizera88}, \cite{Neu10},  \cite{WangXiong05}, \cite{WangXiong06} etc.  In 1995, Xiong \cite{X} proved that there exists a scrambled set of $\{1,2,\cdots,N\}^{\mathbb{N}}$ of full Hausdorff dimension .

The paper is organized as follows. In Section 2, we collect and establish some elementary
properties of continued fractions which will be used later. Section 3 is devoted to proving
Theorem \ref{thm1} and Theorem \ref{thm2} is proved in Section 4.

\section{Preliminaries}

In this section, we present some elementary results in the theory
of  symbolic dynamics and continued fractions which will be used later,see \cite{K}.

\subsection{Basic concepts}

Let $\mathcal{A}=\{1,2,\cdots,N\}$ with $N\geq 2$ or $\mathcal{A}=\mathbb{N}=\{1,2,\cdots,n,\cdots\}$.
Denote  the symbolic space of  one-sided infinite sequences over $\mathcal{A}$ by
 $$\mathcal{A}^{\mathbb{N}}=\{x=(x_{1},x_{2},\cdots) :\quad x_{i}\in \mathcal{A},~\forall i\in
\mathbb{N}\}.$$
The symbol $x_i$ is called the $i$-th coordinate of $x$.

 We assign the discrete topology to $\mathcal{A}$ and the product topology to  $\mathcal{A}^{\mathbb{N}}$.
For $x,y\in \mathcal{A}^{\mathbb{N}}$, the distance $d$ is defined  by
\begin{center}
$d(x,y)=2^{-i}$,  where $i= \inf\{j\geq 0: x_{j+1}\neq y_{j+1}\}.$~
\end{center}
Denote by  $\mathcal{A}^{n}$  the set of all $n$-letter words and
$\mathcal{A}^{*}$ for the set of all  finite words over $\mathcal{A}$.
For any $u$, $v\in \mathcal{A}^{*}$, $uv$ denotes the concatenation of $u$ and $v$.
The symbol ``$|\cdot|$" means the diameter, the length and the absolute value with respect to
a set, a word, and a real number respectively.
For $i,j\in \mathbb{N}$ with $i< j$, we write $x|^{j}_{i}=x_{i},x_{i+1},\cdots, x_{j}$.~

 The shift map $\sigma:\mathcal{A}^{\mathbb{N}}\rightarrow\mathcal{A}^{\mathbb{N}} $
 is defined by
 $$(\sigma(x))_{i}=x_{i+1},~\forall~ i\in \mathbb{N}.$$

\subsection{Continued fractions}

In this subsection,  we collect and establish some elementary properties on
continued fractions. We can refer to \cite{K} for more details.

We denote by $p_n(x)/q_n(x) := [a_1(x),a_2(x),\cdots,a_n(x)]$ the $n$-th convergent of $x$. With the
conventions $p_{-1}(x)=1, q_{-1}(x)=0, p_0(x)=0, q_0(x)=1$, we have
$$ p_{n+1}(x)=a_{n+1}(x)p_n(x)+p_{n-1}(x),~~ n\geq0,$$
$$q_{n+1}(x)=a_{n+1}(x)q_n(x)+q_{n-1}(x),~~ n\geq0. $$

For any $n\geq 1$ and $(a_1,\cdots,a_n)\in \mathbb{N}^n$, let $q_n(a_1,\cdots,a_n)$ be the denominator of
finite continued fraction $[a_1,\cdots,a_n]$. If there is no confusion, we write $q_n$ instead of
$q_n(a_1,\cdots,a_n)$ for simplicity.
For any $n\geq 1$ and $(a_1,\cdots,a_n)\in \mathbb{N}^n$,
we define the $n$-th basic intervals $I(a_1,\cdots,a_n)$ by
$$I(a_1,\cdots,a_n)=\{x\in[0,1): a_i(x)=a_i,  1\leq i\leq n\}. $$
 In the case of $n=1$, $I(i)=\left[\dfrac{1}{i+1},\dfrac{1}{i}\right)$ for $i\geq 1$.
The following lemmata are well known.
\begin{lemma}\cite{K}\label{lem0}
For any $n\geq 1$ and $(a_1,\cdots,a_n)\in \mathbb{N}^n$, we have:\\
$(\rm i)$ $q_n(a_1,\cdots,a_n)\geq 2^{\frac{n-1}{2}}$;\\
$(\rm ii)$ $|I(a_1,\cdots,a_n)|=\dfrac{1}{q_n(q_n+q_{n-1})}.$
\end{lemma}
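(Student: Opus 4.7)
The plan is to derive both parts directly from the two recurrences stated just above the lemma, namely $p_{n+1}=a_{n+1}p_n+p_{n-1}$ and $q_{n+1}=a_{n+1}q_n+q_{n-1}$, together with the initial data $p_{-1}=1$, $p_0=0$, $q_{-1}=0$, $q_0=1$, and the fact that every partial quotient satisfies $a_i\geq 1$.

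For part (i), I plan a short induction. From $a_n\geq 1$ and the recurrence, $q_n=a_nq_{n-1}+q_{n-2}\geq q_{n-1}+q_{n-2}\geq 2q_{n-2}$ for every $n\geq 2$. I would then verify the two base cases $q_1\geq 1=2^{0}$ and $q_2\geq q_1+q_0\geq 2\geq 2^{1/2}$ and propagate along even and odd indices separately: if $q_{n-2}\geq 2^{(n-3)/2}$, then $q_n\geq 2q_{n-2}\geq 2^{(n-1)/2}$.

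For part (ii), I would first establish by induction on $n$ the Möbius-transformation identity
\[
[a_1,a_2,\ldots,a_n,y]=\frac{yp_n+p_{n-1}}{yq_n+q_{n-1}}
\]
for every real $y\geq 1$, where the left-hand side is interpreted with the last entry $a_n$ replaced by $a_n+1/y$ at the bottom of the tower. The base case $n=1$ is a direct computation and the inductive step uses only the recurrences for $p_n,q_n$. Setting $t=1/y\in[0,1)$, each $x\in I(a_1,\ldots,a_n)$ has the form $x=(p_n+tp_{n-1})/(q_n+tq_{n-1})$ where $t=T^n(x)$. Since this map is monotone in $t$, the endpoints of the basic interval are $p_n/q_n$ (corresponding to $t=0$) and $(p_n+p_{n-1})/(q_n+q_{n-1})$ (corresponding to $t\to 1$). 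Taking the difference,
\[
|I(a_1,\ldots,a_n)|=\left|\frac{p_n}{q_n}-\frac{p_n+p_{n-1}}{q_n+q_{n-1}}\right|=\frac{|p_nq_{n-1}-p_{n-1}q_n|}{q_n(q_n+q_{n-1})},
\]
and the classical determinant identity $p_nq_{n-1}-p_{n-1}q_n=(-1)^{n-1}$, which itself follows from a one-line induction on the recurrences, turns the numerator into $1$, giving the desired formula.

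None of the steps is genuinely difficult; the only mild subtlety is bookkeeping at the endpoints of $I(a_1,\ldots,a_n)$, since the orientation of the Möbius map (and hence which of the two rationals is the left endpoint) depends on the parity of $n$. Working with absolute values throughout absorbs this sign issue, and the determinant identity handles the rest uniformly.
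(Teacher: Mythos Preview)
Your argument is correct and is precisely the standard derivation of these facts. Note, however, that the paper does not supply its own proof of this lemma: it is stated with a citation to Khintchine's monograph \cite{K} and prefaced by ``The following lemmata are well known,'' so there is no in-paper proof to compare against. What you have written is essentially the classical proof one finds in that reference---the growth bound via $q_n\geq q_{n-1}+q_{n-2}\geq 2q_{n-2}$ and the length formula via the M\"obius representation $x=(p_n+tp_{n-1})/(q_n+tq_{n-1})$ combined with the determinant identity $p_nq_{n-1}-p_{n-1}q_n=(-1)^{n-1}$. One tiny remark: in part~(i) the step $q_{n-1}+q_{n-2}\geq 2q_{n-2}$ tacitly uses that $(q_k)$ is nondecreasing, which you might state once (it follows immediately from $q_k=a_kq_{k-1}+q_{k-2}\geq q_{k-1}$).
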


\begin{lemma}\cite{W} \label{lem01}
For any $n\geq 1$ and $1\leq k\leq n$,
$$\dfrac{a_k+1}{2}\leq \dfrac{q_n(a_1,\cdots,a_n)}{q_{n-1}(a_1,\cdots,a_{k-1},a_{k+1},\cdots,a_n)} \leq a_k+1.$$
\end{lemma}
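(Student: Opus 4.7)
The plan is to write both the full denominator $q_n(a_1,\dots,a_n)$ and the reduced denominator $q_{n-1}':=q_{n-1}(a_1,\dots,a_{k-1},a_{k+1},\dots,a_n)$ as bilinear polynomials in four building-block continuants
\begin{equation*}
A:=q_{k-1}(a_1,\dots,a_{k-1}),\; B:=q_{k-2}(a_1,\dots,a_{k-2}),\; C:=q_{n-k}(a_{k+1},\dots,a_n),\; D:=q_{n-k-1}(a_{k+2},\dots,a_n),
\end{equation*}
and then to verify both halves of the estimate by routine algebra in $A,B,C,D$ and $a_k$. The decomposition comes from Euler's identity for continuants applied twice: once to split the full sequence at position $k$ and once to split the reduced sequence at the junction $a_{k-1}\mid a_{k+1}$. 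Combined with the one-step recurrence $q_k=a_kA+B$ this yields the clean formulas
\begin{equation*}
q_n(a_1,\dots,a_n)=a_k AC+BC+AD,\qquad q_{n-1}'=AC+BD.
\end{equation*}

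For the upper bound, the difference $(a_k+1)q_{n-1}'-q_n$ simplifies to $a_kBD+(A-B)(C-D)$, which is non-negative because prepending or appending a positive integer never decreases a continuant, so $A\geq B\geq 0$ and $C\geq D\geq 0$. For the lower bound, $2q_n-(a_k+1)q_{n-1}'$ rearranges to $a_k(AC-BD)+2BC+2AD-AC-BD$, and I would split on the size of $a_k$: when $a_k=1$ the expression reduces to $2B(C-D)+2AD$, and when $a_k\geq 2$ the estimate $a_k(AC-BD)\geq 2(AC-BD)$ reduces it to $(AC-BD)+2B(C-D)+2AD$; both final forms are manifestly non-negative once one knows $AC\geq BD$, which in turn follows from $A\geq B$, $C\geq D$, $B,D\geq 0$ via $AC-BD=C(A-B)+B(C-D)$.

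The endpoint cases $k=1$ and $k=n$ force one of $B$ or $D$ to vanish, and the identities above collapse to the one-step recurrence $q_n=a_kq_{n-1}+q_{n-2}$ (or its mirror, via the symmetry $q_n(a_1,\dots,a_n)=q_n(a_n,\dots,a_1)$), from which both bounds are immediate. The main obstacle I expect is the lower bound in the interior: because $2AD-AC$ can be negative when $a_{k+1}\geq 2$, no term-by-term comparison succeeds, and one must trade that negative contribution against the strictly positive slack coming from $a_k(AC-BD)$ before invoking the monotonicity inequalities $A\geq B$ and $C\geq D$.
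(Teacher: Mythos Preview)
Your argument is correct. The decomposition via Euler's continuant identity gives exactly
\[
q_n=a_kAC+BC+AD,\qquad q_{n-1}'=AC+BD,
\]
and your algebraic verifications of both inequalities go through: for the upper bound $(a_k+1)q_{n-1}'-q_n=a_kBD+(A-B)(C-D)\geq 0$, and for the lower bound the case split on $a_k=1$ versus $a_k\geq 2$ reduces everything to combinations of $A\geq B$, $C\geq D$, and $AC\geq BD$, all of which hold by the monotonicity of continuants under appending or prepending a positive partial quotient. The boundary cases $k=1$ and $k=n$ are already absorbed by the general computation once one adopts the standard conventions $q_{-1}=0$, $q_0=1$, so you need not treat them separately.

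As for comparison with the paper: there is nothing to compare. The paper does not prove this lemma; it is simply quoted from Wu~\cite{W} and used as a black box (only the upper bound is invoked, in the proof of Lemma~\ref{lem4.6}, to control how much removing $t(n)$ partial quotients can shrink $q_n$). Your write-up therefore supplies a self-contained proof where the paper offers none. If you want to align it more closely with how the result is actually used later, you could remark that iterating the upper bound yields $q_n(a_1,\dots,a_n)\leq q_{n-t}\,\prod_{j}(a_{k_j}+1)$ when $t$ partial quotients are deleted, which is precisely the estimate the paper needs.
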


For any $N\in \mathbb{N}$, let
$$E_N=\{x\in[0,1):~~1\leq a_n(x)\leq N, ~~\text{for ~any} ~~n\geq 1 \},$$
and
$$E=\{x\in[0,1):\sup\limits_{n\geq 1}a_n(x)<+\infty\}=\bigcup\limits_{N=1}^{\infty}E_N.$$
Jarnik \cite{J} showed the following:
\begin{lemma}\label{lem1}
For any  $N\geq 8$,
$$1-\dfrac{1}{N\log2}\leq \dim_HE_N \leq 1-\dfrac{1}{8N\log N},$$
and $$\dim_HE=1,$$
where ``~~$\dim_H$" denotes the Hausdorff dimension.
\end{lemma}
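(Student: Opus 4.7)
The plan is to establish the two-sided dimension bound on $E_N$ first, after which $\dim_H E = 1$ follows immediately from the countable stability of Hausdorff dimension: since $E=\bigcup_{N\ge 1} E_N$, one has $\dim_H E\ge \sup_N\dim_H E_N\ge\sup_N(1-1/(N\log 2))=1$, while the reverse inequality is trivial as $E\subset[0,1]$.

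For the upper bound $\dim_H E_N\le 1-1/(8N\log N)$, my approach is a covering argument using the natural family of level-$n$ basic cylinders $\{I(a_1,\dots,a_n):1\le a_i\le N\}$, whose diameters are controlled by Lemma~\ref{lem0}(ii). The relevant quantity is the partition sum
\[
Z_n(s)=\sum_{(a_1,\dots,a_n)\in\{1,\dots,N\}^n}|I(a_1,\dots,a_n)|^s\asymp\sum q_n^{-2s},
\]
whose exponential rate $\lambda_N(s):=\lim_n Z_n(s)^{1/n}$ is the leading eigenvalue of the truncated transfer operator $(\mathcal L_{N,s}f)(x)=\sum_{a=1}^N(a+x)^{-2s}f(1/(a+x))$ of the Gauss IFS $\{T_a(x)=1/(a+x)\}_{a=1}^N$; by Bowen's formula, $\dim_H E_N$ is the unique $s_N$ with $\lambda_N(s_N)=1$. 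Since the full (untruncated) operator satisfies $\lambda_\infty(1)=1$ with Gauss density eigenfunction $h(x)=1/((1+x)\log 2)$, the defect $1-\lambda_N(1)$ is controlled by the tail $\sum_{a>N}\mu(I(a))\asymp 1/(N\log 2)$. A Taylor expansion of $s\mapsto\log\lambda_N(s)$ about $s=1$, together with quantitative bounds on $\lambda_N'(1)$ from the logarithmic Lyapunov-type integral $\int\log|T'|\,d\mu$ (which produces the additional $\log N$ factor for the truncated system), yields $1-s_N\lesssim 1/(N\log N)$, with the explicit constant $1/8$ extractable by careful bookkeeping.

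For the lower bound $\dim_H E_N\ge 1-1/(N\log 2)$, I would apply the mass distribution principle. Fix a probability vector $(p_a)_{a=1}^N$ and let $\nu$ be the Bernoulli measure on $E_N$ defined by $\nu(I(a_1,\dots,a_n))=\prod_{k=1}^n p_{a_k}$. By Lemma~\ref{lem01} and the strong law of large numbers, for $\nu$-a.e.\ $x$,
\[
\lim_{n\to\infty}\frac{\log\nu(I_n(x))}{\log|I_n(x)|}=\frac{-\sum_a p_a\log p_a}{2\sum_a p_a\log(a+1)}.
\]
Choosing $(p_a)$ close to the restricted and normalised Gauss weights $\mu(I(a))\asymp 1/(a(a+1)\log 2)$, so that $\nu$ approximates the equilibrium state of the truncated system, makes this ratio at least $1-1/(N\log 2)$ up to lower-order terms, after taking into account the escape estimate $\sum_{a>N}\mu(I(a))\asymp 1/(N\log 2)$. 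A standard ball-to-cylinder reduction---level-$n$ cylinders have comparable sizes and are well separated---then allows the mass distribution principle to conclude $\dim_H E_N\ge 1-1/(N\log 2)$.

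The main technical obstacle is the extraction of the \emph{sharp} constants $1/(8N\log N)$ and $1/(N\log 2)$: both require precise quantitative control of the transfer-operator framework at $s=1$. For the upper bound one must simultaneously estimate $1-\lambda_N(1)$, $\lambda_N'(1)$ and the Taylor remainder uniformly in $N$; for the lower bound one must optimise the Bernoulli weights $(p_a)$, handle the $\nu$-null exceptional set arising from fluctuations in the strong law, and justify the ball-to-cylinder passage rigorously via the geometry of the cylinder partition.
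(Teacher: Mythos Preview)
The paper does not prove this lemma at all: it is stated with the attribution ``Jarnik \cite{J} showed the following'' and used as a black box. So there is nothing to compare against beyond noting that the result is quoted from the 1928 paper of Jarn\'{\i}k.

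Your proposal is a reasonable modern sketch, but two remarks are in order. First, the machinery you invoke---Bowen's formula, the thermodynamic formalism for the Gauss IFS, transfer operators and their spectral perturbation at $s=1$---postdates Jarn\'{\i}k's paper by several decades; his original argument is a direct and elementary covering/mass-distribution calculation working with the explicit lengths of the cylinders $I(a_1,\dots,a_n)$ and the recursion for $q_n$. If the intent were to reproduce the classical proof, you would want to estimate $\sum_{a_i\le N} q_n^{-2s}$ directly rather than via eigenvalue perturbation. Second, and more substantively, your proposal is not a proof but an outline: you correctly identify that the sharp constants $1/(N\log 2)$ and $1/(8N\log N)$ are the crux, yet the extraction of those constants is deferred to ``careful bookkeeping'' and ``lower-order terms''. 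In particular, the lower-bound argument as written only asserts that a suitable Bernoulli measure achieves local dimension at least $1-1/(N\log 2)$ ``up to lower-order terms'', which does not yield the stated inequality for all $N\ge 8$. Since the lemma is cited rather than proved here, this is not a defect relative to the paper, but if you were asked to supply a self-contained proof you would still have real work to do on the constants.
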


\begin{lemma}\cite[Lemma 4.1]{HY}\label{lem4}
Let  $T:[0,1)\rightarrow [0,1)$ be the Gauss transformation. Then
 there exists a constant $\lambda \geq 1$ such that for any $u,v\in \mathbb{N}^{*}$,
$$\lambda^{-1}|I(u)||I(v)|\leq|I(uv)| \leq \lambda|I(u)||I(v)|.$$
\end{lemma}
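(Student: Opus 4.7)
The plan is to reduce the claimed quasi-multiplicativity of the cylinder lengths to the corresponding quasi-multiplicativity of the convergent denominators, and then to invoke Lemma 2.1(ii). The driving mechanism is the classical observation that the matrix product which encodes the convergents of $uv$ factors as the product of the matrix encoding $u$ times the one encoding $v$.

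More precisely, write $u=(a_1,\dots,a_m)$, $v=(b_1,\dots,b_n)$ with $m,n\geq 1$, denote by $p_k',q_k'$ the convergents of $v$ alone, and by $P_k,Q_k$ those of $uv$. From the standard identity
\[
\begin{pmatrix} a_1 & 1\\ 1 & 0\end{pmatrix}\cdots\begin{pmatrix} a_k & 1\\ 1 & 0\end{pmatrix}=\begin{pmatrix} q_k & q_{k-1}\\ p_k & p_{k-1}\end{pmatrix},
\]
applied to $u$, to $v$, and to $uv$, and splitting the product for $uv$ into the $u$-block times the $v$-block, reading off the top-left entry gives
\[
Q_{m+n}=q_m q_n' + q_{m-1}\,p_n'.
\]
Since $p_n'\leq q_n'$ (convergent of a number in $(0,1)$) and $q_{m-1}\leq q_m$, this immediately yields the two-sided bound $q_m q_n'\leq Q_{m+n}\leq 2q_m q_n'$.

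Next I would apply Lemma 2.1(ii) to each of $u$, $v$, and $uv$. The trivial inequalities $q_k\leq q_k+q_{k-1}\leq 2q_k$ sandwich $|I(w)|$ between $\tfrac{1}{2}q_{|w|}^{-2}$ and $q_{|w|}^{-2}$. Combining this with the bound $q_m q_n'\leq Q_{m+n}\leq 2q_m q_n'$ (and with $Q_{m+n-1}\leq Q_{m+n}$, which holds by the denominator recurrence) shows that both $|I(uv)|$ and $|I(u)|\,|I(v)|$ lie in an interval of the form $[c_1,c_2]\cdot(q_m q_n')^{-2}$ for explicit universal constants $0<c_1\leq c_2$. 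Taking $\lambda$ to be the worst ratio between the two quantities (an explicit value such as $\lambda=8$ suffices) establishes both required inequalities at once.

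No serious obstacle is expected: the key structural input is the denominator identity $Q_{m+n}=q_m q_n'+q_{m-1}p_n'$ coming from the $2\times 2$ matrix product, after which the rest is elementary algebra using $p_n'\leq q_n'$ and $q_{k-1}\leq q_k$.
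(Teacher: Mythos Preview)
Your argument is correct. The matrix identity
\[
\begin{pmatrix} a_1 & 1\\ 1 & 0\end{pmatrix}\cdots\begin{pmatrix} a_k & 1\\ 1 & 0\end{pmatrix}=\begin{pmatrix} q_k & q_{k-1}\\ p_k & p_{k-1}\end{pmatrix}
\]
indeed yields $Q_{m+n}=q_m q_n'+q_{m-1}p_n'$, and the bounds $p_n'\leq q_n'$, $q_{m-1}\leq q_m$ give $q_m q_n'\leq Q_{m+n}\leq 2q_m q_n'$. Combining this with the sandwich $\tfrac{1}{2}q_{|w|}^{-2}\leq |I(w)|\leq q_{|w|}^{-2}$ from Lemma~2.1(ii) gives the claim with an explicit constant (your $\lambda=8$ works).

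There is no proof in the paper to compare against: the lemma is quoted without proof from \cite[Lemma~4.1]{HY}. Your argument is the standard one and is essentially what one finds in the cited reference, so nothing further needs to be said.
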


\section{The proof of Theorem \ref{thm1}}

\begin{lemma}\label{lem2}
Let $[a_1(x),a_2(x),a_3(x),\cdots]$ be the CF expansion of $x\in  [0,1)$. Then $x\in D^{c}$ if and only if
there exists some word  $u\in \mathbb{N}^{*}$
which does not appear in $(a_1(x),a_2(x),a_3(x),\cdots)$.
\end{lemma}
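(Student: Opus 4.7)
My plan is to translate the orbit density question into a combinatorial question about the CF expansion via the cylinder sets $I(b_1,\ldots,b_k)$. The crucial observation is the shift identity $T^n(x)\in I(b_1,\ldots,b_k)$ if and only if $(a_{n+1}(x),\ldots,a_{n+k}(x))=(b_1,\ldots,b_k)$, which makes a visit of $T^n(x)$ to $I(u)$ equivalent to the word $u$ appearing in the CF of $x$ starting at position $n+1$. Rational $x$ can be dispatched at the outset, since then the orbit is eventually $0$ and the CF expansion is finite, so both sides of the biconditional hold automatically; from here on I would assume $x$ irrational.

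For the ``if'' direction, suppose that some $u\in\mathbb{N}^*$ never appears in the CF expansion. By the shift identity, $T^n(x)\notin I(u)$ for every $n\ge 0$. Since $I(u)$ is a non-degenerate interval (its length equals $1/(q_k(q_k+q_{k-1}))>0$ by Lemma \ref{lem0}(ii)), its interior in $[0,1]$ is non-empty, so the orbit of $x$ misses a non-empty open set and hence cannot be dense in $[0,1]$, giving $x\in D^c$.

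For the ``only if'' direction, let $V=[0,1]\setminus\overline{\{T^n(x):n\ge 0\}}$, which is non-empty and open in $[0,1]$ because $x\in D^c$. I would pick any irrational $y\in V$ (available since $V$ contains an open interval) and look at the nested cylinders $I(a_1(y),\ldots,a_k(y))$. Each contains $y$, and by Lemma \ref{lem0}(ii) their diameters tend to $0$ as $k\to\infty$. Therefore for $k$ sufficiently large the cylinder $I(a_1(y),\ldots,a_k(y))$ lies entirely inside $V$, and setting $u=(a_1(y),\ldots,a_k(y))$ the orbit of $x$ avoids $I(u)$. The shift identity then shows that $u$ never appears in $(a_1(x),a_2(x),\ldots)$.

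The main obstacle I expect is the forward direction, where one must verify that the open set $V$ genuinely contains a full CF cylinder, not merely an abstract neighborhood. This reduces to two standard facts, namely that every non-empty open subset of $[0,1]$ contains an irrational and that the CF cylinders form a neighborhood basis at any irrational, so the difficulty is mild; the remainder is a direct translation between the symbolic and dynamical pictures.
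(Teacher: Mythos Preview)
Your argument is correct and rests on the same two ingredients as the paper's proof: the shift identity $T^{n}(x)\in I(u)\Leftrightarrow u=(a_{n+1}(x),\ldots,a_{n+|u|}(x))$, and the fact that the cylinders $I(a_{1}(y),\ldots,a_{k}(y))$ shrink to $y$ by Lemma~\ref{lem0}. The only cosmetic difference is in the ``only if'' direction: the paper argues by contradiction (assuming every word appears and showing the orbit closure contains every irrational, hence all of $[0,1]$), whereas you argue directly by choosing an irrational $y$ in the open set $V=[0,1]\setminus\overline{\{T^{n}(x)\}}$ and then a sufficiently short prefix cylinder of $y$ inside $V$; these are contrapositives of one another and involve no new ideas.
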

\begin{proof}
On the one hand, if there exists  $u\in \mathbb{N}^{*}$ which does not appear in $(a_1(x),a_2(x),a_3(x),\cdots)$,
let $y$ be the middle point of the interval $I(u)$, then $|y-T^{n}(x)|\geq |I(u)|/2$ for any $n\geq 0$.
Hence $x\in D^{c}$ by the definition of the set $D^{c}$.

On the other hand,  suppose that  $x\in D^{c}$.
When $x\in \mathbb{Q}\cap[0,1)$, since the CF expansion of every rational
number is finite, then there exists $k=k(x)\geq 1$ such that $x=[a_1(x),a_2(x),\cdots, a_k(x)]$,
thus the finite word  $u:=(a_1(x),a_2(x),\cdots, a_k(x),1)\in\mathbb{N}^{k+1}$ is the required.
Now it left to check the case of $x\in \mathbb{Q}^{c}\cap[0,1)$.

By contradiction, suppose that every  $u\in \mathbb{N}^{*}$ appears in  $(a_1(x),a_2(x),a_3(x),\cdots)$, then, for any
$y\in \mathbb{Q}^{c}\cap[0,1)$ and any $n\geq 1$, the finite word
$(a_1(y),a_2(y),\cdots, a_n(y))\in\mathbb{N}^{n}$ will
 appear in $(a_1(x),a_2(x),\cdots)$.
 Hence there exists $m=m(y,n)$ such that $T^{m}(x)\in I(a_1(y),a_2(y),\cdots, a_n(y))$,
 then we obtain
$$|y-T^{m}(x)|\leq |I(a_1(y),a_2(y),\cdots, a_n(y))|\leq q_n(y)^{-2}\leq 2^{-(n-1)}$$
 by $(\rm i)$ of Lemma \ref{lem0}, it follows that
 $$\mathbb{Q}^{c}\cap[0,1) \subset \overline{\{T^{k}(x): k\geq 0\}},$$
 therefore
 $$[0,1]= \overline{\mathbb{Q}^{c}\cap[0,1)} \subset  \overline{\{T^{k}(x): k\geq 0\}}. $$
that is to say,  $x\in D$ by the definition of the set $D$, which contradicts $x\in D^{c}$. Then the lemma holds.
\end{proof}

\noindent{\bf Proof of Theorem \ref{thm1}:}

(1)  In view of Lemma \ref{lem2}, we have
\begin{equation}\label{Eq.1}
D^{c}=\bigcup\limits_{k\geq 1}\bigcup\limits_{u\in \mathbb{N}^{k}}\big\{x\in [0,1): u~~
\text{ does ~~not~~ appear~~ in}~~
(a_1(x),a_2(x),a_3(x),\cdots) \big\}:=\bigcup\limits_{k\geq 1}\bigcup\limits_{u\in \mathbb{N}^{k}}F_{u}.
\end{equation}
Since the cardinality of $\mathbb{N}^{k}$ is countable, we just need to prove $\mathcal{L}(F_u)=0$,
where $\mathcal{L}$ is the Lebesgue measure on $[0,1]$. The fact that $T$ is ergodic with respect to the Gauss
measure $\mu$ is equivalent to that
$$\mu\Big(\bigcup\limits_{n\geq 0}T^{-n}(B)\Big)=1$$
 for any Borel measurable  subset
$B\subseteq [0,1)$ of positive measure. Since  $\mu(I(u))>0$, we have
 $$\mu\Big(\bigcup\limits_{n\geq 0}T^{-n}(I(u))\Big)=1.$$
Since
 $\bigcup\limits_{n\geq 0}T^{-n}(I(u))\cap F_u=\emptyset$, the assertion
 $\mu(F_u)=0$ follows, which implies $\mathcal{L}(F_u)=0$
 by the equivalence between the Lebesgue and Gauss measures.

(2) Since $E_N\subseteq D^{c}$ for any $N\geq 1$ by Lemma \ref{lem2}, we have $E\subseteq  D^{c}\subseteq [0,1)$. Hence
$\dim_{H} D^{c}=\dim_{H}E=1$ by Lemma \ref{lem1}.

(3) According to (\ref{Eq.1}),
we only need to prove that $F_u$ is nowhere dense since $\mathbb{N}^{k}$ is countable.
Recall that the Gauss measure $\mu$ is an ergodic measure with respect to $T$.
Let $b$ and $c$ be the endpoints of the interval $I(u)$ and $b<c$.
Since  the open interval $(b,c)\subset I(u)$,  we have
 $$\mu\Big(\bigcup\limits_{n\geq 0}T^{-n}(b,c)\Big)=1, ~~ \text{and} ~~
 \bigcup\limits_{n\geq 0}T^{-n}(b,c)\cap F_u \subset \bigcup\limits_{n\geq 0}T^{-n}(I(u))\cap F_u =\emptyset.$$
Then
$ F_u \subset \Big(\bigcup\limits_{n\geq 0}T^{-n}(b,c)\Big)^{c}. $
Let  $\overline{F_u}$ be the closure of $F_u$.
Since  $\bigcup\limits_{n\geq 0}T^{-n}(b,c)$ is an open set,
thus
$$  \overline{F_u} \subset \Big(\bigcup\limits_{n\geq 0}T^{-n}(b,c)\Big)^{c}~~\text{and}~~
\mu(\overline{F_u})\leq \mu\Big \{\Big(\bigcup\limits_{n\geq 0}T^{-n}(b,c)\Big)^{c} \Big\}=0.$$
Then $\mathcal{L}(\overline{F}_u)=0$ by the equivalence between the Lebesgue and Gauss measures.
Hence $F_u$ is nowhere dense.

(4) Since $\mathcal{L}(D)=1$ by (1), we know that $D$ is dense in $[0,1]$. It suffices
to prove that  $D^{c}$ is dense in $[0,1]$, which can be obtained by the facts
that $E\subseteq D^{c}$ and that the set $E$ is dense in $[0,1]$.
\qed

\section{The proof of Theorem \ref{thm2}}

The idea of  proving
Theorem \ref{thm2} is to construct a scrambled set in $\mathbb{N}^\mathbb{N}$, then project such set to the unit interval $[0, 1)$, and finally show the projection is of full Hausdorff dimension.
%
%

\subsection{Construction of a scrambled set}
Fix $N\geq 2$, we define four  mappings $g_N$,  $\Theta_N$, $\Psi_{N}$ and $\Delta_N$ which
are dependent on the symbol $N$. Actually, the idea for constructing  these mappings
 is inspired by Xiong \cite{X}.  In the following, we write $\Sigma_N=\{1,2,\cdots,N\}^{\mathbb{N}}$ to emphasize the dependence of the number $N$ of letters.

(1). Define a map $g_N:\Sigma_N \rightarrow \Sigma_N$ by
\begin{equation*}
(g_N(x))_n=
\begin{cases}
N & \text{if  $n=1$,}\\
1  & \text{if  $2+k(k-1)\leq n\leq 1+k^2$ for $k\geq 1$,}\\
x_{n-1-k^2} & \text{if  $2+k^2\leq n\leq 1+k^2+k$ for $k\geq 1$,}
\end{cases}
\end{equation*}
for  $x=(x_1,x_2,x_3,\cdots)\in \Sigma_N$. That is to say, the form of $g_N(x)$ is as follows:
$$g_N(x)=(N,1,x_1, 1,1,x_1,x_2, 1,1,1,x_1,x_2,x_3, \cdots ,\underbrace{1,1,\cdots, 1}_{n},
x_1,x_2, \cdots ,x_n, \cdots)$$

(2). Define  $\Theta_N:\Sigma_N \rightarrow \Sigma_N $ by
$$\Theta_N(x)=(x_1,x_1,x_2,x_1,x_2,x_3,\cdots, x_1,x_2, \cdots ,x_n,x_1,x_2, \cdots ,x_{n+1},\cdots)$$
for  $x=(x_1,x_2,x_3,\cdots)\in \Sigma_N$.

(3). Define a positive integers sequence
$R=\{r_n\}_{n\geq 1}:=\bigcup\limits_{m=1}^{\infty}\bigcup\limits_{t=1}^{m}\{k:k=m^3+t\}$,
that is,  $R=\{2,9,10, 28,29,30, \cdots\}$.
Define  $\Psi_{N}:\Sigma_N\times \Sigma_N \rightarrow \Sigma_N$ by
\begin{equation*}
(\Psi_{N}(x,y))_n=
\begin{cases}
y_k & \text{if there exists $k\geq 1$ such that  $n=r_k$,}\\
x_{n-k+1} & \text{if there exists $k\geq 1$ such that $r_{k-1}<n<r_k$,}
\end{cases}
\end{equation*}
for  $x=(x_1,x_2,\cdots)\in\Sigma_N$ and $y=(y_1,y_2,\cdots)\in \Sigma_N$.

(4).Define  $\Delta_N:\Sigma_N\rightarrow \Sigma_N$ such that for
$x\in \Sigma_N$,
$$\Delta_N(x)=\Psi_{N}(x,\Theta_N\circ  g_N(x) ).$$
That is to say, the form of $\Delta_N(x)$ is as follows:
\begin{eqnarray*}
\begin{split}
\Delta_N(x)
&=\left(x_1,(\Theta_N(g_N(x)))_1,~~x|_2^7,~~(\Theta_N(g_N(x)))|_2^3,~x|_8^{24},~~
(\Theta_N(g_N(x)))|_4^6,~~x|_{25}^{58},~~
(\Theta_N(g_N(x)))|_7^{10}, \cdots \right)\\
&=\left(x_1,(g_N(x))_1,~~x|_2^7,~~(g_N(x))|_1^2,~x|_8^{24},~~
(g_N(x))|_1^3,~~x|_{25}^{58},~~
(g_N(x))|_1^4, \cdots \right)
\end{split}
\end{eqnarray*}

\begin{remark}\label{rem4.1}
(1). The mappings $g_N$, $\Theta_N$ and  $\Psi_{N}$    are continuous and injective.
The mapping $\Delta_N$ is a continuous bijection from $\Sigma_N$ to $\Delta_N(\Sigma_N)$.

(2).$(\Theta_N(g_N(x)))_1=(\Theta_N(g_N(x)))_2=(g_N(x))_1=N$ and $(\Theta_N(g_N(x)))_3=(g_N(x))_2=1$ for all $x\in \Sigma_N$.
\end{remark}

Let  $S_N:=\Delta_N(\Sigma_N)$ for any $N\geq 2$ and $S:=\bigcup\limits_{N\geq 2}S_N$.
The following lemmata  indicate $S$ is a scrambled set of the shift on $\mathbb{N}^{\mathbb{N}}$.
\begin{lemma}
For any $M,N\geq 2$ with $M\neq N$, then $S_M\cap S_N=\emptyset$.
\end{lemma}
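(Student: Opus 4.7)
The plan is to identify a single coordinate of $\Delta_N(x)$ whose value depends only on the parameter $N$ and not on the choice of $x \in \Sigma_N$. Once such a coordinate is found with value equal to $N$, the disjointness $S_M \cap S_N = \emptyset$ for $M \neq N$ follows immediately, since an element $z \in S_M \cap S_N$ would have that coordinate equal to both $M$ and $N$.

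The natural candidate is the second coordinate, because of the following chain of observations. First, $r_1 = 2$ by the explicit definition of $R$. By the definition of $\Psi_N$, this means $(\Psi_N(u,v))_2 = v_1$ for any $u, v \in \Sigma_N$. Applying this with $u = x$ and $v = \Theta_N(g_N(x))$ gives
\[
(\Delta_N(x))_2 \;=\; (\Psi_N(x,\Theta_N(g_N(x))))_2 \;=\; (\Theta_N(g_N(x)))_1.
\]
Now Remark 4.1(2) states precisely that $(\Theta_N(g_N(x)))_1 = N$ for every $x \in \Sigma_N$. Hence $(\Delta_N(x))_2 = N$ for all $x \in \Sigma_N$, so every element of $S_N = \Delta_N(\Sigma_N)$ has its second coordinate equal to $N$.

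With this in hand, suppose for contradiction that $z \in S_M \cap S_N$ for some $M, N \geq 2$ with $M \neq N$. Writing $z = \Delta_M(x) = \Delta_N(y)$ for some $x \in \Sigma_M$, $y \in \Sigma_N$, the preceding paragraph gives $z_2 = M$ from the first expression and $z_2 = N$ from the second, which is impossible. Thus $S_M \cap S_N = \emptyset$. There is no substantive obstacle here beyond correctly unwinding the definitions of $\Psi_N$ and $\Delta_N$; the only thing to verify carefully is that $r_1 = 2$ so that position $2$ really does index into the $\Theta_N \circ g_N(x)$ block rather than into the $x$-block of $\Delta_N(x)$.
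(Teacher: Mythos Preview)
Your proof is correct. Both arguments hinge on the fact that the parameter $N$ is recorded in every element of $S_N$, but you and the paper extract this in slightly different ways. The paper argues asymmetrically: assuming $M>N$, it observes that the symbol $M$ appears (in fact infinitely often) in every element of $S_M$, whereas $S_N\subset\{1,\ldots,N\}^{\mathbb{N}}$ so the symbol $M$ never appears there. You instead pin down a \emph{specific} coordinate---the second one---and show it equals the parameter for every element of $S_N$, which makes the argument symmetric and avoids the need to track where or how often $N$ occurs. Your route is a bit more concrete and self-contained; the paper's route is essentially the same idea but phrased in terms of the alphabet used rather than a fixed position.
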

\begin{proof}
Suppose that $M>N\geq 2$. Note that $$ S_M=\Delta_M(\Sigma_M)=\Delta_M(\{1,2,\cdots,M\}^{\mathbb{N}}),$$
and  $$S_N=\Delta_N(\Sigma_N)=\Delta_N(\{1,2,\cdots,N\}^{\mathbb{N}}).$$
By the definitions of $\Delta_M$ and $\Delta_N$ and $M>N$, the symbol $M$ appears infinitely often
 in any $y\in S_M$,
but it  does not appear in any $x\in S_N$. Hence $S_M\cap S_N=\emptyset$.
\end{proof}

\begin{lemma}\label{lem4.4}
The set $S$ is a scrambled set of the shift on $\mathbb{N}^{\mathbb{N}}$.
\end{lemma}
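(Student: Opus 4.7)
Given distinct $u,v\in S$, write $u=\Delta_M(x)$ and $v=\Delta_N(y)$ with $x\in\Sigma_M$ and $y\in\Sigma_N$; this is unambiguous since the previous lemma forces $M,N$ to be determined by $u,v$ and Remark~\ref{rem4.1} gives injectivity of $\Delta$. The whole plan rests on one structural identity that I would establish first by unwinding the definitions of $R$, $\Psi_N$, and $\Theta_N$: for every $m\ge1$, the block of $m$ consecutive entries of $\Delta_N(x)$ at positions $m^3+1,\ldots,m^3+m$ is exactly the prefix $g_N(x)|_1^m$. This works because $R$'s $m$-th block is the set of $m$ consecutive integers $m^3+1,\ldots,m^3+m$, whose $r_k$-indices are $k\in\{m(m-1)/2+1,\ldots,m(m+1)/2\}$; these indices pick out precisely the $m$-th telescope block of $\Theta_N(g_N(x))$, which by construction equals $g_N(x)|_1^m$.

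\textbf{Showing $\liminf = 0$.} From the definition of $g_N$, the ``$1$''-positions of $g_N(\cdot)$, namely the ranges $\{k(k-1)+2,\ldots,k^2+1\}$ for $k\ge1$, are independent of both the argument and of $N$. So for any $k\ge1$ and any $m\ge 1+k^2$, the structural identity forces both $u$ and $v$ to equal $1$ at the $k$ consecutive positions $m^3+k(k-1)+2,\ldots,m^3+k^2+1$. Setting $m=1+k^2$ and $n_k:=m^3+k(k-1)+1$ yields $d(\sigma^{n_k}u,\sigma^{n_k}v)\le 2^{-k}$, and hence $\liminf_{n\to\infty}d(\sigma^n u,\sigma^n v)=0$.

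\textbf{Showing $\limsup > 0$, two cases.} If $M\ne N$, say $M>N$, then $v\in S_N\subseteq\{1,\ldots,N\}^{\mathbb{N}}$ never attains the value $M$, while the structural identity gives $u_{m^3+1}=g_M(x)_1=M$ for every $m\ge1$; therefore $d(\sigma^{m^3}u,\sigma^{m^3}v)=1$ infinitely often. If $M=N$ but $x\ne y$, pick $j$ with $x_j\ne y_j$. Since $g_N(\cdot)_{k^2+1+j}=(\cdot)_j$ whenever $k\ge j$, the structural identity places $x_j$ and $y_j$ respectively at position $m^3+k^2+1+j$ of $u$ and $v$ for every $m\ge 1+j+k^2$; letting $m$ run over this infinite set produces infinitely many shifts with $d=1$. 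In both cases $\limsup_{n\to\infty}d(\sigma^n u,\sigma^n v)\ge 1$.

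\textbf{Main obstacle.} The conceptually decisive step is recognizing that the block sizes of $R$ and the telescope block sizes of $\Theta_N$ have been deliberately matched so that the structural identity above holds; once it is in hand, the constant ``$1$''-skeleton of $g_N$ delivers the $\liminf$ assertion and the interspersed ``data slots'' $x_j$ of $g_N$ deliver the $\limsup$ assertion with essentially no further work. What remains is just careful bookkeeping of nested indices across the three layers $\Delta_N=\Psi_N(\cdot,\Theta_N\circ g_N)$.
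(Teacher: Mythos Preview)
Your proof is correct and follows essentially the same approach as the paper's: both arguments exploit the $1$-blocks of $g_N$ to force proximality and the data slots $x_j$ (or the leading symbol $N$) to force separation along a sequence of shifts. The main difference is expository---you isolate and verify the structural identity $(\Delta_N(x))|_{m^3+1}^{m^3+m}=g_N(x)|_1^m$ explicitly, whereas the paper invokes it implicitly under the phrase ``by the definition of $\Delta_N$'' when asserting that long $1$-runs (and the distinguishing symbols) in $\Theta_N\circ g_N(x)$ land at consecutive positions of $\Delta_N(x)$; your version is the more transparent write-up of the same mechanism.
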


\begin{proof}
For any $u,v \in S\subset \mathbb{N}^{\mathbb{N}}$ with $u\neq v$, denoted by $u\in S_N$ and $v\in S_M$,
we shall prove that $(u,v)$ is a scrambled  pair for the shift.
The proof is divided into two cases according to $N=M$ or $N\neq M$.

(1). Assume that   $u,v\in S_N=\Delta_N(\Sigma_N)$ for some $N\geq 2$,
then there exist $x,y\in \Sigma_N$
with $x\neq y$ such that $u=\Delta_N(x)$ and $v=\Delta_N(y)$.
Since $x\neq y$, there exists $k\geq 1$ such that $x_k\neq y_k$.
By the definitions of $g_N$ and $\Theta_N$, the symbol $x_k$ and $y_k$ appear infinitely
often in the same location of $\Theta_N\circ g_N(x)$ and $\Theta_N\circ g_N(y)$ respectively.
By the definition of $\Delta_N$, hence  $x_k$ and $y_k$ appear infinitely
often in the same location of $\Delta_N(x)$ and $\Delta_N(y)$ respectively.
That is to say,
there  exists an increasing sequence $\{n_i\}_{i\geq 1}$ such that
\begin{center}
$(\sigma^{n_i}(u))_1= x_{k} \neq y_{k} =(\sigma^{n_i}(v))_1$,
\end{center}
 it follows that
$$\lim\limits_{i\rightarrow \infty}d(\sigma^{n_i}(u),\sigma^{n_i}(v))=1>0.$$
On the other hand,
according to the maps $g_N$ and $\Theta_N$, there exists an increasing sequence $\{m_j\}_{j\geq 1}$
such that $$(\Theta_N \circ g_N(x))|_{m_j+1}^{m_j+j}=(\Theta_N \circ g_N(y))|_{m_j+1}^{m_j+j}
=(\underbrace{1,1,\cdots, 1}_{j}),$$
by the definition of  $\Delta_N$, there exists an increasing sequence $\{l_j\}_{j\geq 1}$
such that $$u|_{l_j+1}^{l_j+j}=(\Delta_N(x))|_{l_j+1}^{l_j+j}=(\Theta_N\circ g_N(x))|_{m_j+1}^{m_j+j}
=(\underbrace{1,1,\cdots, 1}_{j})=(\Theta_N\circ g_N(y))|_{m_j+1}^{m_j+j}
=(\Delta_N(y))|_{l_j+1}^{l_j+j}=v|_{l_j+1}^{l_j+j},$$
that is, $d(\sigma^{l_j}(u),\sigma^{l_j}(v))\leq 2^{-j}$,
which implies
$$\lim\limits_{j\rightarrow \infty}d(\sigma^{l_j}(u),\sigma^{l_j}(v))=0.$$

(2). Suppose $u\in S_N$ and $v\in S_M$ such that $N,M\geq 2$ and $N\neq M$,
then there exist $x\in \Sigma_N$ and $y\in \Sigma_M$ such that $u=\Delta_N(x)$ and
$v=\Delta_M(y)$. By the definitions of $g_N$, $\Theta_N$, $g_M$ and $\Theta_M$, we have
\begin{center}
$(\Theta_N\circ g_N(x))_1=N$ and $(\Theta_M\circ g_M(y))_1=M$,
\end{center}
by the definition of  $\Delta_N$, there exists an increasing sequence $\{n_i\}_{i\geq 1}$ such that
\begin{center}
$u_{n_i+1}=(\Theta_N\circ g_N(x))_1=N$ and $v_{n_i+1}=(\Theta_N\circ g_N(y))_1=M$,
\end{center}
 it follows that
$$\lim\limits_{i\rightarrow \infty}d(\sigma^{n_i}(u),\sigma^{n_i}(v))=1>0.$$
On the other hand,
similarly to the case of $N=M$,
there exists an increasing sequence $\{l_j\}_{j\geq 1}$
such that $$u|_{l_j+1}^{l_j+j}=(\underbrace{1,1,\cdots, 1}_{j})=v|_{l_j+1}^{l_j+j},$$
that is, $d(\sigma^{l_j}(u),\sigma^{l_j}(v))\leq 2^{-j}$,
which implies
$$\lim\limits_{j\rightarrow \infty}d(\sigma^{l_j}(u),\sigma^{l_j}(v))=0.$$
\end{proof}

Since the CF expansion of $x\in [0,1)\cap \mathbb{Q}^{c}$ is unique, we can
define a continuous  bijection $\phi$ from $ \mathbb{N}^{\mathbb{N}}$ to
$ [0,1)\cap \mathbb{Q}^{c}$ by
\begin{center}
$\phi(a_1,a_2,a_3,\cdots)=[a_1,a_2,a_3,\cdots]$
\end{center}
for  $(a_1,a_2,a_3,\cdots)\in \mathbb{N}^{\mathbb{N}}$.
The following  diagram  is commutative, that is, $\phi\circ \sigma=T\circ \phi$.
\[
\begin{CD}
\mathbb{N}^{\mathbb{N}} @>\sigma>>  \mathbb{N}^{\mathbb{N}}\\
@V{\phi}VV             @VV{\phi}V\\
[0,1)\cap \mathbb{Q}^{c}  @>T>>   [0,1)\cap \mathbb{Q}^{c}
\end{CD}
\]

The following lemma implies that the set $\phi(S)$ is a scrambled set of $T$ on $[0,1)$.

\begin{lemma}\label{lem10}
The set $\phi(S)$ is a scrambled set of $T$ on $[0,1)$.
\end{lemma}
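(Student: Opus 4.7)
The plan is to transport the scrambled-pair property of the shift $\sigma$ on $\mathbb{N}^{\mathbb{N}}$, established in Lemma \ref{lem4.4}, to the Gauss map $T$ on $\phi(S)$ via the semiconjugacy $\phi\circ\sigma=T\circ\phi$ and the continuity of $\phi$. Given distinct $\phi(u),\phi(v)\in\phi(S)$ (so $u\neq v$ in $S$, since $\phi$ is injective), I will verify the two conditions defining a scrambled pair separately.

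For the $\liminf$ condition I appeal to the increasing sequence $\{l_j\}$ produced in the proof of Lemma \ref{lem4.4}, along which $\sigma^{l_j}u$ and $\sigma^{l_j}v$ share a common block of $j$ ones as their first $j$ coordinates. Then $T^{l_j}\phi(u)=\phi(\sigma^{l_j}u)$ and $T^{l_j}\phi(v)=\phi(\sigma^{l_j}v)$ lie in the same $j$-th level cylinder $I(1,\ldots,1)$, whose length is at most $1/q_j^2\leq 2^{1-j}$ by Lemma \ref{lem0}. Letting $j\to\infty$ yields $\liminf_{n\to\infty}|T^n\phi(u)-T^n\phi(v)|=0$.

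The $\limsup>0$ condition is the main obstacle, because the naive attempt ``symbolic distance $1$ implies bounded-below real distance'' fails: adjacent cylinders $I(a)$ and $I(a+1)$ share the endpoint $1/(a+1)$, and $\phi^{-1}$ is not globally continuous on $[0,1)\cap\mathbb{Q}^{c}$ (digits going to infinity collapse to $0$). My plan is to bypass this by restricting to a compact subspace. Choose $K=\max(N,M)$ when $u\in S_N$ and $v\in S_M$; then $u,v\in\Sigma_K$, which is compact in the product topology. Since $\phi|_{\Sigma_K}:\Sigma_K\to\phi(\Sigma_K)$ is a continuous injection from a compact space into the Hausdorff space $[0,1]$, it is a homeomorphism onto its compact image, so $\phi^{-1}$ is continuous on $\phi(\Sigma_K)$. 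By Lemma \ref{lem4.4} there exist distinct $a,b\in\{1,\ldots,K\}$ (with $a=x_k,\,b=y_k$ in Case~1 and $a=N,\,b=M$ in Case~2) and an increasing sequence $\{n_i\}$ satisfying $(\sigma^{n_i}u)_1=a$ and $(\sigma^{n_i}v)_1=b$. If, for contradiction, $\limsup_i|T^{n_i}\phi(u)-T^{n_i}\phi(v)|=0$, then by compactness of $\phi(\Sigma_K)$ I extract a common subsequence along which both $T^{n_{i_m}}\phi(u)$ and $T^{n_{i_m}}\phi(v)$ converge to the same limit $p^{*}\in\phi(\Sigma_K)$. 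Applying the continuous $\phi^{-1}$, the symbolic sequences $\sigma^{n_{i_m}}u$ and $\sigma^{n_{i_m}}v$ both converge to $\phi^{-1}(p^{*})$ in $\Sigma_K$, so their first coordinates eventually coincide, contradicting $a\neq b$. Hence $\limsup_{n\to\infty}|T^n\phi(u)-T^n\phi(v)|>0$, completing the verification that $(\phi(u),\phi(v))$ is a scrambled pair for $T$.
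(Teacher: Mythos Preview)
Your proof is correct. The $\liminf$ argument coincides with the paper's (both exploit that $\phi$ lands the shifted sequences in the same depth-$j$ cylinder of length $\leq 2^{1-j}$), but your treatment of the $\limsup$ condition is genuinely different from the paper's.

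The paper handles the $\limsup$ by an explicit gap-interval computation: along the sequence $\{n_i\}$ where the first partial quotients differ, it locates a concrete cylinder (e.g.\ $I((\sigma^{n_i}u)_1,\,N+1)$ in the equal-$N$ case, or $I(N,2)$, $I(M,2)$ in the unequal case) that must sit between $\phi(\sigma^{n_i}u)$ and $\phi(\sigma^{n_i}v)$, and then bounds its length from below via Lemma~\ref{lem4}. This yields an explicit positive lower bound such as $\lambda^{-1}|I(N+1)|^2$. Your argument instead observes that for $K=\max(N,M)$ the restriction $\phi|_{\Sigma_K}$ is a homeomorphism onto a compact set, so uniform continuity of $\phi^{-1}$ forces real convergence to imply symbolic convergence; a contradiction then falls out immediately from the persistent disagreement in the first coordinate. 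Your route is cleaner and avoids both the case split and the combinatorics of how the subintervals $I(w,i)$ are arranged inside $I(w)$; the paper's route is more constructive and delivers a quantitative lower bound on $\limsup_n|T^n\phi(u)-T^n\phi(v)|$, which your compactness argument does not. Both are complete proofs of the lemma.
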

\begin{proof}
For any $u,v\in S$ with $u\neq v$,  denoted by $u\in S_N$ and $v\in S_M$,
we shall prove that $(\phi(u),\phi(v))$ is a scrambled  pair for $T$.

(1). $\mathbf{Lower~~ limits}$.  According to Lemma \ref{lem4.4},
$\liminf\limits_{n\rightarrow \infty}d(\sigma^{n}(u),\sigma^{n}(v))=0,$
there exists an increasing sequence $\{n_j\}_{j\geq 1}$
such that $$\lim\limits_{j\rightarrow \infty}d(\sigma^{n_j}(u),\sigma^{n_j}(v))=0.$$
Since the map $\phi$ is continuous and $\phi\circ \sigma=T\circ \phi$, we obtain
$$\lim\limits_{j\rightarrow \infty}|T^{n_j}\circ \phi(u)-T^{n_j}\circ \phi(v)|
=\lim\limits_{j\rightarrow \infty}|\phi(\sigma^{n_j}(u))-\phi(\sigma^{n_j}(v))|=0.$$

(2). $\mathbf{Upper~~ limits}$.
The proof is divided into two cases according to $N=M$ or $N\neq M$.

$\rm(i)$ Suppose that $N=M$, that is,  $u, v\in S_N$ for some $N\geq 2$,
then $u_i,v_i\in\{1,2,\cdots, N\}$ for any $i\geq 1$.
According to Lemma \ref{lem4.4},
 $\limsup\limits_{n\rightarrow \infty}d(\sigma^{n}(u),\sigma^{n}(v))=1,$
there exists an increasing sequence $\{m_i\}_{i\geq 1}$ such that
$(\sigma^{m_i}(u))_1 \neq  (\sigma^{m_i}(v))_1$. Then
\begin{eqnarray*}
\begin{split}
|T^{m_i}\circ \phi(u)-T^{m_i}\circ \phi(v)| &= |\phi(\sigma^{m_i}(u))-\phi(\sigma^{m_i}(v))|\\
&\geq \min\{|I((\sigma^{m_i}(u))_1,(N+1) )|, |I((\sigma^{m_i}(v))_1,(N+1) )|\}\\
&\geq \min\{\lambda^{-1}|I((\sigma^{m_i}(u))_1)||I(N+1)|, \lambda^{-1}|I((\sigma^{m_i}(v))_1)||I(N+1)|\}\\
&\geq \lambda^{-1}|I(N+1)|^{2}>0
\end{split}
\end{eqnarray*}
where the first inequality holds
since  the interval $I((\sigma^{m_i}(u))_1,(N+1) )$ or $I((\sigma^{m_i}(v))_1,(N+1) )$ is contained in the gap
between the point $\phi(\sigma^{m_i}(u))$ and $\phi(\sigma^{m_i}(v))$
and $u_i,v_i\in\{1,2,\cdots, N\}$ for any $i\geq 1$
and the second inequality holds by  Lemma \ref{lem4}.
Therefore
$$\limsup\limits_{n\rightarrow \infty}|T^{n}\circ \phi(u)-T^{n}\circ \phi(v)|>0.$$

$\rm(ii)$ Suppose that $u\in S_N$, $v\in S_M$ for some $N,M\geq 2$ and $N\neq M$.
According to (2) of Remark \ref{rem4.1} and constructions of $S_N$ and $S_M$,
there exists an increasing sequence $\{l_i\}_{i\geq 1}$ such that
$$(\sigma^{l_i}(u))_1(\sigma^{l_i}(u))_2=N1,~~  (\sigma^{l_i}(v))_1(\sigma^{l_i}(v))_2=M1,$$
Then
\begin{eqnarray*}
\begin{split}
|T^{l_i}\circ \phi(u)-T^{l_i}\circ \phi(v)| &= |\phi(\sigma^{l_i}(u))-\phi(\sigma^{l_i}(v))|\\
&\geq \min\{|I(N,2)|,|I(M,2)|\}\\
&\geq \lambda^{-1}|I(2)| \min\{|I(N)|,|I(M)|\}>0
\end{split}
\end{eqnarray*}
where the first inequality holds since the interval $I(N,2)$ or $I(M,2)$ is contained in the gap
between the point $\phi(\sigma^{l_i}(u))$ and $\phi(\sigma^{l_i}(v))$ and
the second inequality holds by  Lemma \ref{lem4}.
Therefore
$$\limsup\limits_{n\rightarrow \infty}|T^{n}\circ \phi(x)-T^{n}\circ \phi(y)|>0.$$
\end{proof}

\subsection{Estimation of  $\mathbf{\dim_H\phi(S_N)}$ }

Recall that $E_N=\{x\in[0,1):~~1\leq a_n(x)\leq N, ~~\forall n\geq 1 \}$ for any $N\geq 2$.
Since the Hausdorff dimension  of rational number is zero, we only need to consider
the set $E_N\cap \mathbb{Q}^{c}$. Note that $\phi(\{1,2,\cdots, N\}^{\mathbb{N}})=E_N\cap \mathbb{Q}^{c}$.
By the construction of $S_N$,  $S_N\subset \{1,2,\cdots, N\}^{\mathbb{N}}$,
we know that $\phi(S_N)\subset E_N\cap \mathbb{Q}^{c}$ for any $N\geq 2$, then
$$\dim_H\phi(S_N)\leq \dim_H(E_N\cap \mathbb{Q}^{c})=\dim_HE_N.$$

\begin{proposition}\label{prop4.1}
For any $N\geq 2$, we have $\dim_H\phi(S_N)=\dim_HE_N$.
\end{proposition}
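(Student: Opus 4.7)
The inequality $\dim_H \phi(S_N) \le \dim_H E_N$ has already been recorded, so I only need the matching lower bound. My plan is to push a Bernoulli measure from $\Sigma_N := \{1,\ldots,N\}^{\mathbb N}$ to $\phi(S_N)$ through the continuous injection $\pi := \phi \circ \Delta_N$ and apply the mass distribution principle. Set $k(n) := n - \#(R \cap [1,n])$. The key structural observation is that the ``free'' coordinates in $\Delta_N(x)|_1^n$ spell out $x|_1^{k(n)}$ in order, while the ``fixed'' coordinates at indices $r_j \le n$ only involve $x|_1^{O(n^{1/6})}$: indeed $\Theta_N(y)|_1^J$ depends on $y|_1^{O(\sqrt J)}$, and $g_N(x)|_1^L$ in turn depends on $x|_1^{O(\sqrt L)}$. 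A direct count using that $R$ contributes $m$ indices inside $[m^3+1, m^3+m]$ gives $\#(R \cap [1,n]) \asymp n^{2/3}$, so $k(n) = n - O(n^{2/3})$, and the assignment $x|_1^{k(n)} \mapsto \Delta_N(x)|_1^n$ is a bijection onto the level-$n$ cylinders of $S_N$.

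\textbf{Comparison of cylinder lengths.} Decompose $\Delta_N(x)|_1^n$ into its maximal monochromatic (entirely free, or entirely fixed) blocks; only $O(n^{1/3})$ such blocks appear, in view of the group structure of $R$. Applying Lemma~\ref{lem4} across each concatenation point, and then again to merge the free blocks into the single word $x|_1^{k(n)}$,
\[
|I(\Delta_N(x)|_1^n)| \;\ge\; \lambda^{-O(n^{1/3})}\, |I(x|_1^{k(n)})| \prod_i |I(G_i)|,
\]
where the $G_i$ range over the fixed blocks. Every entry of $G_i$ belongs to $\{1,\ldots,N\}$, so Lemma~\ref{lem0} yields $|I(G_i)| \ge (N+1)^{-2|G_i|}$; since $\sum_i |G_i| = O(n^{2/3})$, the product is at least $e^{-C(N) n^{2/3}}$. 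Thus
\[
-\log |I(\Delta_N(x)|_1^n)| \;\le\; -\log |I(x|_1^{k(n)})| + O(n^{2/3}).
\]

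\textbf{Push-forward and conclusion.} Fix a probability vector $p=(p_1,\ldots,p_N)$, let $\nu_p$ be the corresponding Bernoulli measure on $\Sigma_N$, and put $\mu_p := \pi_* \nu_p$ on $\phi(S_N)$. By Shannon--McMillan--Breiman and the Birkhoff ergodic theorem applied to the Gauss cocycle $-\log|T'|$, for $\nu_p$-a.e.\ $x$,
\[
-\frac{1}{m}\log \nu_p([x|_1^m]) \to H(\nu_p) = -\sum_i p_i \log p_i, \qquad -\frac{1}{m}\log |I(x|_1^m)| \to \chi(\nu_p) > 0,
\]
where $\chi(\nu_p)$ is the Lyapunov exponent of the Gauss map relative to $\phi_*\nu_p$. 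For small $r>0$, pick $n$ so that $|I(\Delta_N(x)|_1^{n+1})| \le r \le |I(\Delta_N(x)|_1^n)|$; then $B(\pi(x),r)$ meets at most three level-$n$ cylinders, each of $\mu_p$-mass at most $\nu_p([x|_1^{k(n)-1}])$ (only the last free coordinate can vary, as fixed positions are determined by $x|_1^{O(n^{1/6})}$). Combining this with the block estimate and $k(n)/n \to 1$,
\[
\liminf_{r\to 0}\frac{\log \mu_p(B(\pi(x),r))}{\log r} \;\ge\; \frac{H(\nu_p)}{\chi(\nu_p)},
\]
so the mass distribution principle yields $\dim_H \phi(S_N) \ge H(\nu_p)/\chi(\nu_p)$. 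As is classical for the conformal iterated function system $\{x \mapsto 1/(a+x) : 1 \le a \le N\}$ that generates $E_N$ (Bowen's formula), $\sup_p H(\nu_p)/\chi(\nu_p) = \dim_H E_N$, and the lower bound follows.

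\textbf{Main obstacle.} The chief technical nuisance is the block-decomposition estimate: one must iterate Lemma~\ref{lem4} across $O(n^{1/3})$ concatenation points and bound $\prod_i |I(G_i)|$ uniformly in $x$. Beyond this, the argument is standard ergodic theory together with the classical thermodynamic characterisation of $\dim_H E_N$.
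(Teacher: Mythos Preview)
Your route is quite different from the paper's. The paper never pushes a measure forward; it instead considers the inverse bijection $g:=\phi\circ\Delta_N^{-1}\circ\phi^{-1}:\phi(S_N)\to E_N\cap\mathbb Q^c$ and shows (Lemma~\ref{lem4.6}) that $g$ is locally $\tfrac{1}{1+\epsilon}$-H\"older for every $\epsilon>0$. The cylinder comparison is done via Lemma~\ref{lem01}, which deletes the $t(n)$ ``fixed'' digits one at a time at a multiplicative cost of at most $(N+1)$ each, giving $|I(b_1,\dots,b_n)|\ge|\overline I(b_1,\dots,b_n)|^{1+\epsilon}$ once $n$ is large. A single appeal to the behaviour of Hausdorff dimension under H\"older maps then yields $\dim_H E_N\le(1+\epsilon)\dim_H\phi(S_N)$. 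This is shorter and needs no ergodic theory.

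Your argument has a genuine gap at the very last step. Bowen's formula together with the variational principle gives $\dim_H E_N=\sup_{\mu}h(\mu)/\chi(\mu)$ where $\mu$ ranges over \emph{all} $T$-invariant probability measures supported on $E_N$, and the supremum is attained only at the Gibbs equilibrium state for $-s\log|T'|$. That equilibrium state is \emph{not} a product (Bernoulli) measure, since the potential $2s\log(1/x)$ is not cohomologous to a function of the first digit alone. Because the Bernoulli family $\{\nu_p\}$ is a compact $(N{-}1)$-simplex on which $H/\chi$ is continuous, the supremum over Bernoulli $p$ is attained at some $\nu_{p_0}\ne\mu_s$, and the variational principle then forces $H(\nu_{p_0})/\chi(\nu_{p_0})<\dim_H E_N$ strictly. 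So your proof, as written, only yields $\dim_H\phi(S_N)\ge\sup_p H(\nu_p)/\chi(\nu_p)<\dim_H E_N$. The repair is to run the same push-forward argument with the Gibbs measure (or with $k$-step Markov approximants, for which $h/\chi\to\dim_H E_N$); your block-length estimate and the SMB/Birkhoff inputs survive unchanged. A smaller issue: the claim that the neighbouring level-$n$ cylinders meeting $B(\pi(x),r)$ each have mass at most $\nu_p([x|_1^{k(n)-1}])$ is unjustified, since adjacent level-$n$ CF cylinders can differ in early digits (for instance $I(1,2)$ and $I(2,1)$ are adjacent), so the free coordinates need not agree up to position $k(n)-1$.
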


Consider a map $g:\phi(S_N)\rightarrow E_N\cap \mathbb{Q}^{c}$ defined by
$$ g(b)=\phi\circ \Delta_N^{-1} \circ \phi^{-1}(b)$$
for $b\in \phi(S_N)$. Since $\phi$ is continuous and bijective from $\mathbb{N}^\mathbb{N}$
to  $[0,1)\cap \mathbb{Q}^{c}$ and $\Delta_N$ is  continuous and bijective  from  $\Sigma_N$
to $S_N=\Delta_N(\Sigma_N)$,
the map $g$ is a continuous bijection on $\phi(S_N)$.
Proposition \ref{prop4.1} is the corollary of the following lemma, thus we omit the proof of Proposition \ref{prop4.1}.
\begin{lemma}\label{lem4.6}
For any $\epsilon>0$, the map $g$ satisfies locally $\frac{1}{1+\epsilon}$-H$\ddot{o}$lder condition.
\end{lemma}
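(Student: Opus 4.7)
The plan is to compare cylinder lengths through $\Delta_N$ and convert the comparison into a metric estimate. Writing $u = \phi^{-1}(b) = \Delta_N(x)$ and $u' = \phi^{-1}(b') = \Delta_N(x')$ with $x, x' \in \Sigma_N$, let $n$ be the largest integer with $u_i = u'_i$ for all $i \le n$. Since $r_k = M^3 + t$ for $k = M(M-1)/2 + t$ with $1 \le t \le M$, the number of $r$-indices in $\{1, \ldots, n\}$ is $O(n^{2/3})$; hence $x_i = x'_i$ for $i \le m$ with $m \ge n - C_1 n^{2/3}$, and $g(b), g(b') \in I(x_1, \ldots, x_m)$, so $|g(b) - g(b')| \le |I(x_1, \ldots, x_m)|$.

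I would then establish $|I(x_1, \ldots, x_m)| \le e^{C_2 n^{2/3}} |I(u_1, \ldots, u_n)|$ by decomposing $(u_1, \ldots, u_n)$ into its $O(n^{1/3})$ alternating $x$- and $y$-blocks and iterating Lemma \ref{lem4}. The $y$-block contribution has total length $k = O(n^{2/3})$ with all entries in $\{1, \ldots, N\}$ (since $y = \Theta_N \circ g_N(x) \in \Sigma_N$), and Lemma \ref{lem0} bounds it below by $c(N+1)^{-2k}$, producing the stated inequality.

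The hard part will be obtaining the matching metric lower bound $|b - b'| \ge c_N |I(u_1, \ldots, u_n)|$ with $c_N$ depending only on $N$, since only the trivial upper bound $|b - b'| \le |I(u_1, \ldots, u_n)|$ is immediate and without the reverse inequality the cylinder comparison cannot be turned into a Hölder estimate. Here one uses essentially that $b, b' \in E_N$: all their partial quotients being at most $N$ constrains $T^n(b)$ and $T^n(b')$ to the sub-regions $\bigcup_{a \le N} I(u_{n+1}, a)$ and $\bigcup_{a \le N} I(u'_{n+1}, a)$, whose explicit endpoints of the form $(a+1)/(j(a+1)+1)$ keep them separated by a gap $|T^n(b) - T^n(b')| \ge c_N > 0$ depending only on $N$. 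Coupling this with the bounded distortion of $T^n$ on $I(u_1, \ldots, u_n)$, encoded in Lemma \ref{lem4} via $|(T^n)'(\xi)| \asymp |I(u_1, \ldots, u_n)|^{-1}$, together with the mean value theorem, delivers the desired inequality.

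Assembling the three ingredients gives $|g(b) - g(b')| \le C e^{C_2 n^{2/3}} |b - b'|$. Lemma \ref{lem0}(i) yields $|b - b'| \le 2^{-(n-1)}$, so $n = O(\log |b - b'|^{-1})$ and $n^{2/3} = o(\log |b - b'|^{-1})$; consequently $e^{C_2 n^{2/3}} = |b - b'|^{-o(1)}$ as $b, b'$ tend to any common base point. Therefore, for any $\epsilon > 0$ and $b, b'$ in a sufficiently small neighbourhood, one has $|g(b) - g(b')| \le C' |b - b'|^{1/(1+\epsilon)}$, establishing the claimed local $\frac{1}{1+\epsilon}$-Hölder condition.
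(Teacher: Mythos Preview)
Your argument is correct and follows the same overall strategy as the paper: let $n$ be the common prefix length of $u,u'\in S_N$, compare $|I(x_1,\dots,x_m)|$ (with $m=n-t(n)$) to $|I(u_1,\dots,u_n)|$ up to a factor of size $e^{O(n^{2/3})}$, obtain the reverse metric inequality $|b-b'|\gtrsim_N |I(u_1,\dots,u_n)|$, and absorb $e^{O(n^{2/3})}$ into $|b-b'|^{-\epsilon/(1+\epsilon)}$ using $|b-b'|\le 2^{-(n-1)}$.

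The mechanical choices differ slightly from the paper. For the cylinder comparison you decompose $(u_1,\dots,u_n)$ into $O(n^{1/3})$ alternating $x$-blocks and $y$-blocks and iterate Lemma~\ref{lem4}, whereas the paper removes the $t(n)$ inserted coordinates one at a time via Lemma~\ref{lem01}, obtaining directly $q_n\le (N+1)^{t(n)}\overline{q_n}$ and hence $|\overline{I}|\le |I|^{1/(1+\epsilon)}$ once $n\ge K(\epsilon)$. For the lower bound on $|b-b'|$ you invoke bounded distortion of $T^n$ on the cylinder and the mean value theorem; the paper instead isolates this as a separate elementary statement (Lemma~\ref{lem8}) by observing that the second-level cylinder $I(u_1,\dots,u_n,u_{n+1},N{+}1)$ (or its primed version) sits in the gap between $b$ and $b'$, and then applies Lemma~\ref{lem4}. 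One small remark: the distortion bound $|(T^n)'|\asymp |I(u_1,\dots,u_n)|^{-1}$ is not literally the content of Lemma~\ref{lem4} but comes from the explicit M\"obius form of the inverse branch, $y\mapsto (p_n+p_{n-1}y)/(q_n+q_{n-1}y)$; this is standard and your argument goes through. Both routes yield the same local $\frac{1}{1+\epsilon}$-H\"older conclusion.
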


Recall that the map $f:X\rightarrow \mathbb{R}$ $(X\subset \mathbb{R})$ satisfies
locally $\alpha$-H$\rm \ddot{o}$lder condition, if there exist a real number $r>0$ and a constant $C>0$ such that,
for any $x,y\in X$ with $|x-y|<r$,
$$|f(x)-f(y)|\leq C |x-y|^{\alpha}.$$

Before proving Lemma \ref{lem4.6},
we make use of a kind of symbolic space described as follows:

For a fixed integer $N\geq 2$ and any $n\geq 1$, we can check that
$$\phi(S_N)=\bigcap\limits_{n\geq 1}\bigcup\limits_{(b_1,b_2,\cdots, b_n)\in A_n}I(b_1,b_2,\cdots, b_n),$$
where
$$A_n=\{(b_1,b_2,\cdots, b_n)\in \{1,2,\cdots,N\}^{n}:
(b_1,b_2,\cdots, b_n)=(x_1,x_2,\cdots, x_n), \forall (x_1,x_2,\cdots)\in S_N\}.$$
Recall that $R=\{r_n\}_{n\geq 1}:=\bigcup\limits_{m=1}^{\infty}\bigcup\limits_{t=1}^{m}\{k:k=m^3+t\}$.
For any $n\geq 1$, let $t(n)=\sharp\{k\geq 1: r_k\leq n, r_k\in R\}$, we obtain
$$t(n)\leq 1+2+\cdots+(n^{\frac{1}{3}}+1) \leq
\dfrac{(n^{\frac{1}{3}}+1)(n^{\frac{1}{3}}+2)}{2}\leq 2n^{\frac{2}{3}},~~ when~~n\geq 8,$$
similarly,
$$t(n)\geq 1+2+\cdots+(n^{\frac{1}{3}}-2) \geq
\dfrac{(n^{\frac{1}{3}}-2)(n^{\frac{1}{3}}-1)}{2}\geq  \frac{n^{\frac{2}{3}}}{8},~~ when~~n\geq 64.$$
For any $(b_1,b_2,\cdots, b_n)\in A_n$, let $\overline{(b_1,b_2,\cdots, b_n)}$ be the finite word
by eliminating the terms $\{b_{r_k}: r_k\leq n, r_k\in R \}$.
Then
 $$\overline{(b_1,b_2,\cdots, b_n)}\in \{1,2,\cdots,N\}^{n-t(n)}. $$

For convenience, set
$$\overline{q_n}(b_1,b_2,\cdots, b_n)=q_{n-t(n)}\overline{(b_1,b_2,\cdots, b_n)}~~~~~~
and ~~~~~~ \overline{I}(b_1,b_2,\cdots, b_n)=I\overline{(b_1,b_2,\cdots, b_n)}.$$

For any $b=[b_1,b_2,\cdots],c=[c_1,c_2,\cdots]\in\phi(S_N)\subset [0,1)$ and $b\neq c$,
there exists  $n\in \mathbb{N}$ such that
$(b_1,\cdots, b_n)=(c_1,\cdots, c_n)$ and $b_{n+1}\neq c_{n+1}$.

Based on the following fact: for any $w\in \mathbb{N}^{*}$, the subintervals
 $I(w,i)~~(i\in \mathbb{N})$  are positioned as follows (see FIGURE \ref{Fig.1}).
\begin{figure}[H]
  \centering
  \includegraphics[width=5.5in]{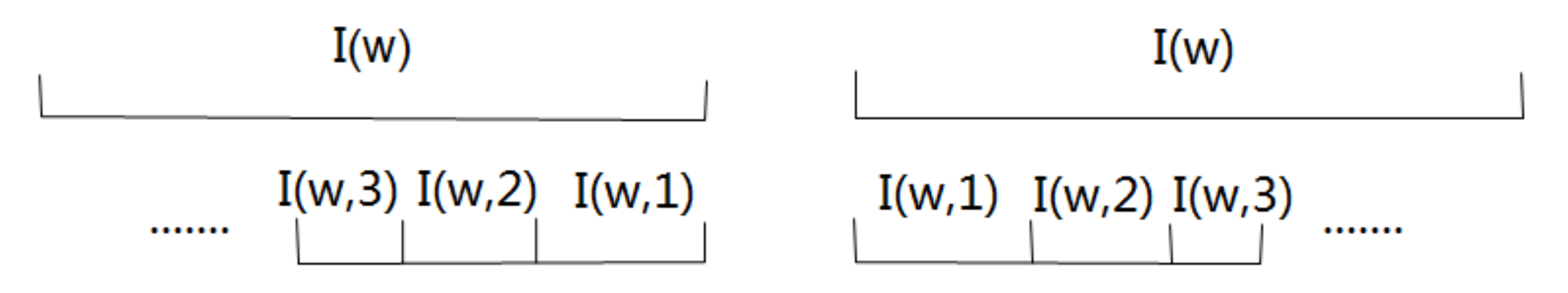}\\
  \caption{Subintervals $I(w,i)$ of $I(w)$ for the Gauss map  }\label{Fig.1}
\end{figure}
\begin{lemma}\label{lem8}
We have $|b-c|\geq \lambda^{-2}|I(N+1)|^2  |I(b_1,\cdots, b_{n})|$
where $\lambda$ is defined in Lemma $\rm \ref{lem4}$.
\end{lemma}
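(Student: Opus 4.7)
The plan is to find between $b$ and $c$ a concrete subcylinder whose length gives the stated lower bound, by combining the monotone arrangement of subintervals shown in Figure \ref{Fig.1} with the constraint $b_{n+2}, c_{n+2} \le N$ coming from $b, c \in \phi(S_N)$. Set $w = (b_1, \ldots, b_n)$ and, without loss of generality, assume $b_{n+1} < c_{n+1}$ (both at most $N$).

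If the two indices are non-adjacent, i.e.\ $c_{n+1} - b_{n+1} \ge 2$, then any integer $j$ with $b_{n+1} < j < c_{n+1}$ satisfies $j \le N$, and by Figure \ref{Fig.1} the cylinder $I(w, j)$ sits strictly between $I(w, b_{n+1}) \ni b$ and $I(w, c_{n+1}) \ni c$. One application of Lemma \ref{lem4} then gives
$$|b - c| \ge |I(w, j)| \ge \lambda^{-1} |I(w)| |I(j)| \ge \lambda^{-1} |I(w)| |I(N+1)|,$$
which is stronger than what is claimed.

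The tight case is the adjacent one, $c_{n+1} = b_{n+1} + 1$, where $I(w, b_{n+1})$ and $I(w, c_{n+1})$ share a common endpoint $p$. The key geometric observation, obtained by applying Figure \ref{Fig.1} one level deeper, is that the sub-subintervals $I(w, c_{n+1}, k)$ accumulate exactly at $p$, because $p$ is the endpoint of $I(w, c_{n+1})$ facing its smaller-indexed neighbor $I(w, b_{n+1})$. Since $c_{n+2} \le N$, the point $c$ lies in one of the first $N$ sub-subintervals of $I(w, c_{n+1})$, and these are separated from $p$ by $I(w, c_{n+1}, N+1)$; meanwhile $b$ sits on the opposite side of $p$. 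Hence $|b - c| \ge |I(w, c_{n+1}, N+1)|$. Two applications of Lemma \ref{lem4}, together with $|I(c_{n+1})| \ge |I(N+1)|$ (from $c_{n+1} \le N$), then yield the desired bound $\lambda^{-2} |I(N+1)|^2 |I(w)|$.

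The main point I expect to check carefully is the accumulation direction in the adjacent case: one must confirm that the sub-subintervals of $I(w, c_{n+1})$ pile up at $p$ rather than at the opposite endpoint, since otherwise $c$ could approach $p$ arbitrarily and the argument would break down. This in turn follows from tracking the orientation of the local branch of $T$ on successive cylinders, and is precisely the geometric content encoded by Figure \ref{Fig.1}.
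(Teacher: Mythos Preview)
Your argument is correct and follows the same geometric idea as the paper: locate the cylinder $I(w,c_{n+1},N+1)$ in the gap between $b$ and $c$, using that the sub-subintervals $I(w,c_{n+1},k)$ accumulate toward the smaller-indexed neighbor, together with $c_{n+2}\le N$. The only difference is that the paper does not split into adjacent and non-adjacent cases: the observation you make in your adjacent case --- that $I(w,c_{n+1},N+1)$ lies between $c$ and the endpoint of $I(w,c_{n+1})$ facing $I(w,b_{n+1})$ --- holds equally well when $c_{n+1}-b_{n+1}\ge 2$, so your Case~1 is unnecessary (though harmless and indeed slightly sharper).
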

\begin{proof}
Suppose that $b_{n+1}< c_{n+1}$.
The interval $I(c_1,\cdots, c_{n},c_{n+1},(N+1))$ is contained
in the gap between $b$ and $c$ whether $n$ is even or odd.
Similarly to the case $c_{n+1}< b_{n+1}$, the interval $I(b_1,\cdots, b_{n}, b_{n+1},(N+1))$ is contained
in the gap between $b$ and $c$ whether $n$ is even or odd.
By Lemma \ref{lem4} and $1 \leq b_{n+1}\neq c_{n+1}\leq N$,
\begin{eqnarray*}
\begin{split}
|b-c| &\geq \min\{|I(b_1,\cdots,b_{n}, b_{n+1},N+1)|, |I(c_1,\cdots, c_{n}, c_{n+1},N+1)|\}\\
&\geq  \min\{\lambda^{-2}|I(b_1,\cdots,b_{n})|\cdot|I(b_{n+1})|\cdot |I(N+1)|,~~
 \lambda^{-2}|I(c_1,\cdots, c_{n})|\cdot|I(c_{n+1})|\cdot |I(N+1)| \}\\
&\geq \lambda^{-2} |I(b_1,\cdots,b_{n})| |I(N+1)|^2
\end{split}
\end{eqnarray*}
\end{proof}

\noindent{\bf Proof of Lemma \ref{lem4.6}:}
From Lemma \ref{lem0}, for any $(b_1,b_2,\cdots, b_n)\in A_n$, we have
$$\overline{q_n}^{2}(b_1,b_2,\cdots, b_n)=q_{n-t(n)}^{2}\overline{(b_1,b_2,\cdots, b_n)}
\geq 2^{(n-t(n)-1)}.$$
Let $\epsilon>0$,  there exists $K=K(\epsilon)>64$ such that for any $n\geq K$, we have
$$2^{(n-t(n)-1)\epsilon}\geq 2\cdot (N+1)^{2t(n)}.$$
By Lemma \ref{lem0} and  Lemma \ref{lem01},
\begin{eqnarray*}
\begin{split}
|I(b_1,b_2,\cdots, b_n)| &\geq \frac{1}{2q_n^2(b_1,b_2,\cdots, b_n)}
\geq \frac{1}{2q_{n-t(n)}^2\overline{(b_1,b_2,\cdots, b_n)}(N+1)^{2t(n)}}\\
&\geq\frac{1}{q_{n-t(n)}^{2(1+\epsilon)}\overline{(b_1,b_2,\cdots, b_n)}}
\geq |\overline{I}(b_1,b_2,\cdots, b_n)|^{1+\epsilon}
\end{split}
\end{eqnarray*}
Let $r<\lambda^{-2}|I(N+1)|^2 \cdot \min\limits_{(b_1,\cdots, b_{K})\in A_{K}} |I(b_1,\cdots, b_{K})|$.
For any $c\in (b-r,b+r)$, there exists $n$ such that $(b_1,\cdots, b_n)=(c_1,\cdots, c_n)$ and $b_{n+1}\neq c_{n+1}$,
where $b=[b_1,b_2,\cdots],c=[c_1,c_2,\cdots]\in\phi(S_N)\subset [0,1)$, then
\begin{eqnarray*}
\begin{split}
|g(b)-g(c)|& = |\phi\circ \Delta_N^{-1} \circ \phi^{-1}(b)-\phi\circ \Delta_N^{-1} \circ \phi^{-1}(c)|\\
&\leq |\overline{I}(b_1,b_2,\cdots, b_n)|
\leq|I(b_1,b_2,\cdots, b_n)|^{\frac{1}{1+\epsilon}}\\
&\leq(\lambda^{2}|I(N+1)|^{-2})^{\frac{1}{1+\epsilon}}|b-c|^{\frac{1}{1+\epsilon}}
\end{split}
\end{eqnarray*}
where the last inequality holds by Lemma \ref{lem8}.
\qed

\vskip 0.2cm
\noindent{\bf Proof of Theorem \ref{thm2}:}
 By Lemma \ref{lem10}, the set $\phi(S)$ is a scrambled set of $T$ on $[0,1)$
 where $S=\bigcup\limits_{N\geq 2}S_N$.
 Since $\dim_H \phi(S)\geq \dim_H \phi(S_N)= \dim_H E_N\geq 1-\dfrac{1}{N\log2}$
 for any $N\geq 2$, thus $\dim_H \phi(S)=1$.
\qed

{\bf Acknowledgement.} The authors thank Professor Ying Xiong for the helpful suggestions. The work was supported by NSFC 11371148, Guangdong
Natural Science Foundation 2014A030313230, and "Fundamental Research Funds
for the Central Universities" SCUT 2015ZZ055 and 2015ZZ127.

\bibliographystyle{amsplain}

\end{document}